\newcommand{\R}{\mathbb{R}}
\newcommand{\Q}{\mathbb{Q}}
\newcommand{\Z}{\mathbb{Z}}
\newcommand{\C}{\mathbb{C}}
\newcommand{\E}{\mathbb{E}}
\newcommand{\rst}[1]{\ensuremath{{\mathbin\mid}\raise-.5ex\hbox{$#1$}}}
\DeclareMathOperator{\GL}{GL}
\DeclareMathOperator{\Aut}{Aut}
\DeclareMathOperator{\Aff}{Aff}
\DeclareMathOperator{\aff}{aff}
\DeclareMathOperator{\Pol}{P}
\DeclareMathOperator{\Iso}{Iso}
\author{Jonas Der\'e\thanks{E-mail: {\sf jonas.dere@kuleuven.be}. The author was supported by a postdoctoral fellowship of the Research Foundation -- Flanders (FWO).}
}
\title{\bf NIL-affine crystallographic actions\\ of virtually polycyclic groups}
\date{\today}
\newtheorem{Def}{Definition}[section]
\newtheorem{Ex}[Def]{Example}
\newtheorem{Cor}[Def]{Corollary}
\newtheorem{Thm}[Def]{Theorem}
\newtheorem{Prop}[Def]{Proposition}
\newtheorem{Lem}[Def]{Lemma}
\newtheorem{Rmk}[Def]{Remark}
\newtheorem*{Prop*}{Proposition}
\newtheorem*{Lem*}{Lemma}
\newtheorem*{AFP}{Algebraic Fixed Point Problem}
\newcommand{\suchthat}{\;\ifnum\currentgrouptype=16 \middle\fi|\;}
\newcommand{\compcent}[1]{\vcenter{\hbox{$#1\circ$}}}
\newcommand{\comp}{\mathbin{\mathchoice
		{\compcent\scriptstyle}{\compcent\scriptstyle}
		{\compcent\scriptscriptstyle}{\compcent\scriptscriptstyle}}}
	\newcommand\restr[2]{{
			\left.\kern-\nulldelimiterspace 
			#1 
			\vphantom{\big|} 
			\right|_{#2} 
	}}
\newtheorem*{rep@theorem}{\rep@title}
\newcommand{\newreptheorem}[2]{%
	\newenvironment{rep#1}[1]{%
		\def\rep@title{#2 \ref{##1}}%
		\begin{rep@theorem}}%
		{\end{rep@theorem}}}
\begin{document}

\maketitle

\begin{abstract}
A classical result by K.B.~Lee states that every group morphism between almost crystallographic groups is induced by an affine map on the nilpotent Lie group whereon these groups by definition act. It is the main technique for studying morphisms between virtually nilpotent groups, having important applications in fixed point theory, for example as a tool to compute Nielsen numbers for self-maps on infra-nilmanifolds. In this paper we generalize this result to morphisms between virtually polycyclic groups, which are known to act crystallographically by affine transformations on a nilpotent Lie group. The main method is to study the special class of translation-like actions, which behave well under taking subgroups and restricting the action to invariant right cosets.


\end{abstract}

\section{Introduction}

Every virtually polycyclic group $\Gamma$ admits a NIL-affine crystallographic action \cite{deki02-1}, i.e.~an action by affine transformations on a simply connected and connected (hereinafter called $1$-connected) nilpotent Lie group $N$ which is both properly discontinuous and cocompact. Here, the affine group $\Aff(N)$ of affine transformations is equal to the semidirect product $N \rtimes \Aut(N)$ which acts naturally on $N$ as $$(x,\delta) \cdot n = x \delta(n)$$ for all $n \in N$ and $\left(x, \delta \right) \in \Aff(N)$. This is the natural generalization of the abelian case $N = \R^n$, where $\Aff(\R^n) = \R^n \rtimes \GL(n,\R)$ is the classical group of affine transformations. 

For studying self-maps on infra-solvmanifolds up to homotopy, it is important to find a manageable class of maps which induce every possible group morphism between virtually polycyclic groups. For the strongest result, we need an extra assumption on the NIL-affine crystallographic action of $\Gamma$, namely that the unipotent part of the Zariski closure acts by left translations on $N$. We call the NIL-affine crystallographic action translation-like in that case. The first main result states that such actions exist for every virtually polycyclic group. 

\begin{Thm}
	\label{main2}
	For every virtually polycyclic group $\Gamma$, there exists a translation-like NIL-affine crystallographic action $$\rho: \Gamma \to \Aff(N)$$ with $N$ a $1$-connected nilpotent Lie group.
\end{Thm}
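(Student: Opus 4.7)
The natural strategy is to construct the action directly from the algebraic hull of $\Gamma$ rather than modifying an arbitrary NIL-affine crystallographic action coming from \cite{deki02-1}. Let $H_\Gamma$ denote the algebraic hull of the virtually polycyclic group $\Gamma$: a $\Q$-defined linear algebraic group containing $\Gamma$ as a Zariski-dense subgroup, satisfying $Z_{H_\Gamma}(U) \subseteq U$ for the unipotent radical $U := U(H_\Gamma)$, and with $\dim U$ equal to the Hirsch length $h(\Gamma)$. Fix a Levi decomposition $H_\Gamma = U \rtimes T$ over $\R$, with $T$ a maximal reductive subgroup, and set $N := U(\R)$, a $1$-connected nilpotent Lie group of dimension $h(\Gamma)$.

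Since $U$ is normal in $H_\Gamma$, the group $T(\R)$ acts on $N$ by Lie-group automorphisms via conjugation inside $H_\Gamma(\R)$, which yields an embedding $\iota : H_\Gamma(\R) = N \rtimes T(\R) \hookrightarrow N \rtimes \Aut(N) = \Aff(N)$. Define $\rho : \Gamma \to \Aff(N)$ as the restriction of $\iota$ to $\Gamma \subset H_\Gamma(\R)$. The translation-like property is built into this construction: $\rho(\Gamma) \subseteq N \rtimes T(\R)$, which is a real algebraic subgroup of $\Aff(N)$ whose Levi factor $T(\R)$ is reductive, so its unipotent radical equals $N$ itself. Consequently the unipotent radical of the Zariski closure $\overline{\rho(\Gamma)}^{Z}$ is contained in $N$ and therefore acts by left translations.

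What remains is to verify that $\rho$ is properly discontinuous and cocompact. Cocompactness should follow from the equality $\dim N = h(\Gamma)$ combined with the Zariski density of $\Gamma$ in $H_\Gamma$ via a standard covolume comparison. The main obstacle, in my view, is proper discontinuity: an element $\gamma = (u_\gamma, t_\gamma) \in \Gamma$ acts on $n \in N$ by $n \mapsto u_\gamma \cdot t_\gamma(n)$, and the discreteness of $\Gamma$ inside $H_\Gamma(\R)$ does not a priori produce a properly discontinuous action on $N$, since $T(\R)$ need not be compact. The key input is the strong algebraic hull condition $Z_{H_\Gamma}(U) \subseteq U$, which forces every nontrivial element of $T(\R)$ to act nontrivially on $U$ by conjugation. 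Coupled with a careful analysis of how $\Gamma$ projects to $T$ — in particular, of which elements of $T(\R)$ can accumulate along $\Gamma$ — one should be able to conclude that only finitely many $\gamma$ send a given compact subset of $N$ into itself. Packaging this cleanly, most likely via the Fitting subgroup of $\Gamma$ and the structure of its virtually nilpotent quotients, is the principal technical hurdle.
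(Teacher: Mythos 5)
Your overall strategy (realize $\Gamma$ inside its algebraic hull $H_\Gamma = U \rtimes T$ and let it act on $N = U(\R)$ through the embedding $N \rtimes T(\R) \hookrightarrow \Aff(N)$) is a known viable route, but as written it has two genuine gaps. The first is that the object you start from need not exist: a $\Q$-algebraic hull with an \emph{injective} map $i:\Gamma \to H_\Gamma$ exists only when $\Gamma$ has no non-trivial finite normal subgroup, since a finite normal subgroup would centralize $U(H_\Gamma)$ and violate the condition $C_{H_\Gamma}(U(H_\Gamma)) = Z(U(H_\Gamma))$. The theorem is claimed for \emph{every} virtually polycyclic group, so you must at least first pass to $\faktor{\Gamma}{F}$ with $F$ the maximal finite normal subgroup and argue that a crystallographic action of the quotient pulls back to one of $\Gamma$ (the kernel being finite, proper discontinuity survives); this step is absent. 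Worse, inside this paper the existence of the $\Q$-algebraic hull (Corollary \ref{existQhull}) is itself \emph{deduced from} Theorem \ref{main2} via Theorems \ref{existtl} and \ref{closurehull}, so invoking the hull here is circular unless you import its existence from an external source such as Raghunathan or Baues.

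The second and more serious gap is the one you flag yourself: proper discontinuity of the action of $\Gamma$ on $N$ is never proved, only declared to be "the principal technical hurdle," and cocompactness is asserted via an unspecified "covolume comparison." Since $T(\R)$ is in general non-compact and $\Gamma$ is only known to be Zariski-dense in $H_\Gamma$, discreteness of the action on $N$ really does require an argument, and without it the proposal does not establish the theorem. The paper avoids this entirely by reversing the order of construction: it starts from Milnor's properly discontinuous affine action $\rho:\Gamma \to \Aff(\Q^n)$ (Theorem \ref{milnor}), takes a fixed point $x_0$ of a Levi subgroup of $\overline{\rho(\Gamma)}$ (Lemma \ref{reductivefixed}), and restricts to the closed orbit $U(\Gamma)\cdot x_0$; there proper discontinuity is inherited for free from the ambient action, and cocompactness follows from the Hirsch-length count of Proposition \ref{cocompact} together with Theorem \ref{subaction}. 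If you want to salvage your approach, you should either supply the discreteness argument (essentially the content of Baues's construction of standard infra-solvmanifolds) or reduce to an ambient properly discontinuous action as the paper does.
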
 
\noindent The first version of this theorem without the extra condition of acting translation-like is due to K. Dekimpe in \cite{deki02-1} and a new proof was given in \cite{dp15-1} more recently. The techniques we develop in this paper give a new and simplified proof of this result and are moreover important for studying NIL-affine actions of virtually polycyclic groups which are not necessarily cocompact. In Corollary \ref{affineconjugate} we prove that translation-like NIL-affine crystallographic actions are unique up to affine conjugation. 

With this stronger notion of translation-like NIL-affine crystallographic actions, we show that every group morphism between virtually polycyclic groups is induced by an affine map. Here an affine map $\alpha = (x,\delta) \in \aff(N_1,N_2)$ is defined as one of the form $\alpha(n) = \delta(n) x$ for all $n \in N_1$ where $\delta: N_1 \to N_2$ is a continuous group morphism between nilpotent Lie groups and $x \in N_2$. By convention, we let the translation part $x \in N_2$ of an element $\alpha = (x,\delta) \in \aff(N_1,N_2)$ act as a right translation, whereas the translation part of elements in $\Aff(N)$ acts as a left translation. Although the group $\Aff(N)$ forms a subset of $\aff(N,N)$, the context will make clear whether the translation part is a left or a right translation. In Example \ref{affinedifferent} we show that in fact this convention induces the same set of maps $\aff(N_1,N_2)$ as letting $x$ act as a left translation, but it simplifies the statement of certain results. 

\begin{Thm}
	\label{main}
	Let $\varphi: \Gamma_1 \to \Gamma_2$ be a group morphism between virtually polycyclic groups $\Gamma_1, \Gamma_2$  with translation-like NIL-affine crystallographic actions $\rho_i: \Gamma_i \to \Aff(N_i)$ for $i \in \{1,2\}$. There exists an affine map $\alpha \in \aff(N_1,N_2)$ such that $$\rho_2( \varphi(\gamma)) \comp \alpha = \alpha \comp \rho_1(\gamma)$$ for all $\gamma \in \Gamma_1$.
\end{Thm}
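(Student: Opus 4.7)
The plan is to argue by induction on the Hirsch length of $\Gamma_1$, exploiting the two main technical themes of the paper: that translation-like NIL-affine actions are stable under passage to subgroups, and that they restrict well to invariant right cosets of $\rho$-invariant Lie subgroups. The base case is when $\Gamma_1$ is finite: cocompactness of $\rho_1$ together with $1$-connectedness of $N_1$ forces $N_1 = \{e\}$, and the statement reduces to producing a point of $N_2$ fixed by $\rho_2 \comp \varphi$. Since the linear parts in $\Aut(N_2)$ of a finite group of affine transformations lie in a compact subgroup, a standard averaging argument in exponential coordinates on $N_2$ should supply such a fixed point.

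For the inductive step I would choose a normal subgroup $H \trianglelefteq \Gamma_1$ with $\Gamma_1/H \cong \Z$, handling the virtually polycyclic flexibility by a finite-index reduction where needed. The restriction $\rho_1|_H$ is translation-like by the paper's stability results, but typically fails to remain cocompact on all of $N_1$; the restriction-to-invariant-coset technique should, however, produce a translation-like NIL-affine crystallographic action of $H$ on a nilpotent Lie group of strictly smaller dimension, and similarly for $\rho_2|_{\varphi(H)}$. Applying the induction hypothesis to $\varphi|_H$ on these reduced actions, and lifting, I would obtain an $H$-equivariant affine map $\alpha_0 \in \aff(N_1, N_2)$.

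It then remains to adjust $\alpha_0$ so that it also intertwines $\rho_1(t)$ with $\rho_2(\varphi(t))$ for a generator $t$ of $\Gamma_1/H$. The map $\rho_2(\varphi(t))^{-1} \comp \alpha_0 \comp \rho_1(t)$ is again $H$-equivariant (since $H$ is normal in $\Gamma_1$), so one is comparing two $H$-equivariant affine maps; the translation-like hypothesis should reduce their discrepancy to an element of the unipotent radical of the Zariski closure, which acts on $N_2$ purely by left translations, and a carefully chosen left translation from this subgroup should then reconcile the two maps without destroying $H$-equivariance. This extension step is the main obstacle: it requires a rigidity statement for $H$-equivariant affine maps up to translation-like modifications, and it is precisely here that the full strength of the translation-like hypothesis enters the argument.
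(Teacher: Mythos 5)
There is a genuine gap here: the inductive step is not actually carried out, and the part you defer is the entire content of the theorem. Your plan produces (by induction) an $H$-equivariant affine map and then must extend it across the quotient $\Gamma_1/H \cong \Z$; you acknowledge that this ``requires a rigidity statement for $H$-equivariant affine maps up to translation-like modifications'' and leave it there. But the claim that two $H$-equivariant affine maps differ by a left translation from the unipotent radical is not justified and is not obviously true. The inductive hypothesis only controls the restricted action on a coset of $M_1 = U\bigl(\overline{\rho_1(H)}\bigr)$, which is a \emph{proper} subgroup of $N_1$ when $h(H) < h(\Gamma_1)$; so the linear part of $\alpha_0$ is pinned down only on $M_1$, and ``lifting'' it to a continuous morphism $N_1 \to N_2$ compatible with the generator $t$ is exactly the hard extension problem, not a routine step. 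A second unaddressed instance of the same problem hides in your phrase ``finite-index reduction where needed'': a virtually polycyclic group need not surject onto $\Z$ (e.g.\ the infinite dihedral group), so you must first pass to a finite-index subgroup and then extend the resulting equivariant map back up a finite quotient --- again an extension problem of the same nature, with no mechanism supplied. (Your base case is fine: finiteness of $\Gamma_1$ forces $N_1=\{e\}$ and Lemma \ref{reductivefixed} gives the required fixed point, no averaging needed.)

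The paper avoids induction entirely. It first restricts the $\Gamma_2$-action to the coset $U(G_2)x_0$ (with $x_0$ a fixed point of a Levi subgroup of $G_2 = \overline{\varphi(\Gamma_1)}$, via Lemma \ref{reductivefixed}, Proposition \ref{newaffine} and Theorem \ref{subaction}) so as to assume $\varphi$ surjective; then Theorem \ref{extensionQhull} together with Theorem \ref{closurehull} extends $\varphi$ in one step to a morphism $\bar{\varphi}: G_1 \to G_2$ of the algebraic hulls. Because the actions are translation-like and cocompact, $U(G_i) = N_i$, so the restriction of $\bar{\varphi}$ to unipotent radicals \emph{is} the linear part $\delta: N_1 \to N_2$, and the translation part is a fixed point of the Levi subgroup $\bar{\varphi}(L_1)$; a short computation then verifies the intertwining. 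The rigidity you are missing is thus supplied globally by the hull extension theorem rather than assembled step by step. If you want to salvage your approach, you would essentially have to reprove Theorem \ref{extensionQhull} by induction on the Hirsch length, which is considerably harder than using it directly.
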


\noindent A different formulation of the theorem is that every map between infra-solvmanifolds is homotopic to one induced by an affine map. 

The essence of the proof is to restrict the action of $\Gamma_2$ to the image of $\varphi$ in order to get a surjective group morphism, which allows us to extend the group morphism to the $\Q$-algebraic hull of the virtually polycyclic groups introduced in Definition \ref{hull}. With some extra work, we prove that the linear part of $\alpha$ is uniquely determined by the morphism $\varphi$ and describe the possibilities for the translation part of $\alpha$ in Theorem \ref{fixedpoint}.

For general NIL-affine crystallographic actions, we cannot hope to find an affine map, for which we give a counterexample in Example \ref{onlypoly}. Let $\rho_i: \Gamma \to \Aff(N_i)$ be two NIL-affine crystallographic actions, then we say that $\rho_1$ and $\rho_2$ are polynomially conjugate if there exists a bijection $p: N_1 \to N_2$ such that both $p$ and $p^{-1}$ can be expressed as a polynomial in a (and hence any) Mal'cev basis of $N_i$ and with $$p \comp \rho_1(\gamma) \comp p^{-1} = \rho_2(\gamma)$$ for all $\gamma \in \Gamma$. In \cite{bd00-1} it is shown that all NIL-affine crystallographic actions of $\Gamma$ are polynomially conjugate. So any result about translation-like NIL-affine crystallographic actions has an immediate analogue for general NIL-affine crystallographic actions. 

\begin{Cor}
	Let $\varphi: \Gamma_1 \to \Gamma_2$ be a group morphism between virtually polycyclic groups $\Gamma_1, \Gamma_2$  with NIL-affine crystallographic action $\rho_i: \Gamma_i \to \Aff(N_i)$. There exists a polynomial map $p: N_1 \to N_2$ such that $$\rho_2( \varphi(\gamma)) \comp p = p \comp \rho_1(\gamma)$$ for all $\gamma \in \Gamma_1$.
\end{Cor}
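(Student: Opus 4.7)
The plan is to reduce the general case to Theorem~\ref{main} by sandwiching a translation-like intertwiner between two polynomial conjugations supplied by \cite{bd00-1}.

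First, I would invoke Theorem~\ref{main2} to pick a translation-like NIL-affine crystallographic action $\rho_i': \Gamma_i \to \Aff(N_i')$ for each $i \in \{1,2\}$. Applying Theorem~\ref{main} to the morphism $\varphi$ together with the actions $\rho_1'$ and $\rho_2'$ yields an affine map $\alpha \in \aff(N_1', N_2')$ satisfying $\rho_2'(\varphi(\gamma)) \comp \alpha = \alpha \comp \rho_1'(\gamma)$ for all $\gamma \in \Gamma_1$. This is the translation-like backbone of the argument.

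Next, because $\rho_1$ and $\rho_1'$ are both NIL-affine crystallographic actions of $\Gamma_1$, the result of \cite{bd00-1} quoted in the excerpt gives a polynomial bijection $p_1: N_1 \to N_1'$ with polynomial inverse conjugating $\rho_1$ to $\rho_1'$, and similarly a polynomial bijection $p_2: N_2' \to N_2$ with polynomial inverse conjugating $\rho_2'$ to $\rho_2$. Set $p = p_2 \comp \alpha \comp p_1$, so that $p: N_1 \to N_2$ is the composition of three maps which are polynomial in Mal'cev coordinates (an affine map is in particular polynomial in such coordinates), hence $p$ itself is polynomial. A direct substitution then gives
$$
\rho_2(\varphi(\gamma)) \comp p = p_2 \comp \rho_2'(\varphi(\gamma)) \comp \alpha \comp p_1 = p_2 \comp \alpha \comp \rho_1'(\gamma) \comp p_1 = p \comp \rho_1(\gamma)
$$
for every $\gamma \in \Gamma_1$, which is the required intertwining property.

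The only nontrivial point is the stability of the polynomial class under these compositions, but this is immediate once one works in fixed Mal'cev bases for $N_1, N_1', N_2', N_2$: the coordinate expression of $\alpha$ is polynomial (in fact of degree bounded by the nilpotency class), and composition with polynomial bijections $p_1$ and $p_2$ preserves polynomiality. No additional work is needed, so the corollary follows directly from Theorems~\ref{main2} and~\ref{main} together with the cited polynomial conjugacy result.
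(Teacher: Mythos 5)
Your proposal is correct and follows essentially the same route as the paper: the paper proves this (as Corollary \ref{generalmorph}) by combining Theorem \ref{main} with the fact that every crystallographic action is polynomially conjugate to a translation-like one (Corollary \ref{everypoly}, resp.\ the cited conjugacy result of \cite{bd00-1}), and your sandwich $p = p_2 \comp \alpha \comp p_1$ is exactly that argument written out. The only cosmetic difference is that you invoke Theorem \ref{main2} plus the external polynomial-conjugacy result of \cite{bd00-1}, where the paper uses its own Corollary \ref{everypoly} to produce the conjugating polynomial maps directly.
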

\noindent This corollary is the generalization to all group morphisms of the paper \cite{bd00-1} which only considers automorphisms of a fixed virtually polycyclic group. The techniques we develop are based on that paper as well.

The remainder of this paper is structured as follows. We start by giving some preliminaries about crystallographic actions of virtually polycyclic groups and linear algebraic groups in Section \ref{prel}. Next, Section \ref{sec:trans} gives the first step in the proof, namely by defining translation-like NIL-affine actions and studying their properties. Afterwards we introduce $\Q$-algebraic hulls of virtually polycyclic groups in Section \ref{sec:hull}, which form the good generalization of lattices in nilpotent Lie groups for questions about group morphisms. We apply these results to construct crystallographic actions from a given properly discontinuous action of virtually polycyclic groups in Section \ref{sec:comp}. Finally, Section \ref{sec:main} achieves the main result by combining all the previous work.

\section{Preliminaries}
\label{prel}
There are several different definitions and results we need to introduce before starting the proof of our main result. We give a short overview about virtually polycyclic groups, NIL-affine crystallographic actions and linear algebraic groups. 

\paragraph{Virtually polycyclic groups}

A group $\Gamma$ is called \textbf{virtually polycyclic} (or polycyclic-by-finite) if there exists a subnormal series $$1 = \Gamma_0 \triangleleft \Gamma_1 \triangleleft \ldots \triangleleft \Gamma_k = \Gamma$$ such that for every $i$ the group $\faktor{\Gamma_{i+1}}{\Gamma_i}$ is either finite or infinite cyclic. The number of infinite cyclic groups does not depend on the choice of subnormal series and is called the \textbf{Hirsch length} $h(\Gamma)$ of $\Gamma$. Every virtually polycyclic group has a unique maximal finite normal subgroup.

We recall the following classical result of \cite{miln77-1}, which gives already a first geometric realization of virtually polycyclic groups. 

\begin{Thm}
	\label{milnor}
	Every virtually polycyclic group $\Gamma$ admits a properly discontinuous action on $\R^n$ by affine transformations, i.\@e.\@ there exists a map $\rho: \Gamma \to \Aff(\R^n)$ such that $\Gamma$ acts properly discontinuously. Moreover, we can assume that $\rho(\Gamma) \subseteq \Aff(\Q^n) = \Q^n \rtimes \GL(n,\Q)$. 
\end{Thm}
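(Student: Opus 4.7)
The plan is to follow Milnor's original strategy in three stages: reduce to a torsion-free finite-index subgroup, embed that subgroup as a lattice in a simply connected solvable Lie group, and then construct the affine action from the Lie group structure. For the reduction, every virtually polycyclic group embeds faithfully in $\GL(k,\Z)$ for some $k$ by a theorem of Auslander, so Selberg's lemma produces a torsion-free finite-index subgroup $\Gamma_0 \leq \Gamma$. Given a properly discontinuous affine action of $\Gamma_0$ on $\Q^m$, inducing to $\Gamma$ yields a properly discontinuous affine action on $\Q^{m[\Gamma:\Gamma_0]}$ via the regular-representation construction, so it suffices to treat the torsion-free case.

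For torsion-free polycyclic $\Gamma_0$, I would invoke Mostow's embedding theorem to realize $\Gamma_0$ as a cocompact lattice in a $1$-connected solvable Lie group $S$ with $\dim S = h(\Gamma_0)$. The next ingredient is the Mostow semisimple splitting $S \hookrightarrow M = U \rtimes T$, where $U$ is a $1$-connected nilpotent Lie group (the nilshadow of $S$) and $T$ is an abelian group acting by semisimple automorphisms on $U$. Both $U$ and $T$ inherit a natural $\Q$-structure coming from the Zariski closure of the adjoint action of $\Gamma_0$, and $\Gamma_0$ remains discrete in $M$.

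The heart of the proof is to produce a faithful affine representation of $M$ on some $\R^n$ for which the restriction to $\Gamma_0$ is properly discontinuous. The nilpotent group $U$ acts simply transitively on itself by left translations, which becomes a polynomial action on $\R^{\dim U}$ via a Mal'cev basis; composing with a faithful rational representation of the algebraic hull of $M$ converts this into a genuine affine action on a larger $\R^n$ and extends it to the whole of $M$ by combining with the semisimple action of $T$. Proper discontinuity then follows because $\Gamma_0$ is discrete and cocompact in $S$, which sits closed in $M$, and rationality of the image in $\Aff(\Q^n)$ comes from choosing the Mal'cev basis of $U$ and the representation both over $\Q$.

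The main obstacle is this last step: ensuring simultaneously that the translation action of $U$ is linearized, that the action of $T$ preserves the affine structure, and that the whole construction descends to $\Q$. This is where Milnor's original technical work is concentrated, and it is also the point where the modern $\Q$-algebraic hull machinery (to be introduced later in the paper) gives a much cleaner treatment; for the classical statement here, the semisimple splitting of $M$ combined with a single faithful $\Q$-rational embedding of its algebraic hull into some $\GL(n,\Q) \ltimes \Q^n$ suffices.
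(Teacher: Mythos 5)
First, a point of reference: the paper does not prove this statement at all --- it is quoted as a classical result of \cite{miln77-1}, with only the remark that the rationality refinement $\rho(\Gamma) \subseteq \Aff(\Q^n)$ follows from inspecting Milnor's proof. So your proposal has to be measured against Milnor's argument rather than against anything in the text. Your overall architecture --- pass to a torsion-free finite-index subgroup, induce affine representations back up to $\Gamma$, realize the torsion-free group inside a $1$-connected solvable Lie group and use a semisimple splitting $U \rtimes T$ --- is a recognizable and viable route, and the induced-representation device for the finite-index reduction is indeed standard and correct.

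As written, however, the proposal has genuine gaps. (i) The entire content of the theorem sits in the step you defer: producing an \emph{affine} (not merely polynomial) action of $U \rtimes T$ on some $\R^n$, defined over $\Q$, whose restriction to $\Gamma_0$ is properly discontinuous. Saying that left translation of $U$ on itself ``becomes a polynomial action via a Mal'cev basis'' and that composing with a faithful rational representation ``converts this into a genuine affine action'' is not a construction: a polynomial action is not affine, and the mechanism for trading the polynomial translation action for an affine one on a larger space (e.g.\ embedding $U \rtimes T$ into some $\GL(m,\Q)$ with $U$ unipotent and letting it act linearly on $M_m(\R)$, where the orbit of the identity is a closed polynomial copy of $U$) is precisely the technical heart that must be supplied; you acknowledge this yourself, which makes the proposal a roadmap rather than a proof. (ii) The claim that Mostow's theorem realizes every torsion-free polycyclic $\Gamma_0$ as a cocompact lattice in a $1$-connected solvable Lie group is false as stated; Mostow and Auslander only give this for a further finite-index subgroup, which is exactly why general virtually polycyclic groups are fundamental groups of infra-solvmanifolds rather than solvmanifolds. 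This is repairable by your own finite-index reduction, but it needs to be said. (iii) Proper discontinuity of $\Gamma_0$ does not follow from ``$\Gamma_0$ is discrete in $S$ and $S$ acts freely'': a free action of a Lie group restricted to a discrete subgroup need not be properly discontinuous. One needs the relevant orbit to be closed and the orbit map to be a homeomorphism onto it --- a separate nontrivial lemma, which is exactly the role played by \cite[Lemma 2.]{bd00-1} and Theorem \ref{subaction} in this paper. Without it the final inference of your third paragraph does not go through.
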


The last statement of this theorem is not explicitly written in the paper \cite{miln77-1}, but it follows directly from the proof. This result led to the question whether every group which acts properly discontinously on $\R^n$ by affine transformations is virtually polycyclic. A negative answer to this question was produced in \cite{marg87-1} with the construction of a properly discontinuous action of a free group on two generators on $\R^3$.

\paragraph{NIL-affine crystallographic actions}

The notion of NIL-affine crystallographic actions originates from the classical \textbf{Euclidean crystallographic groups}. These groups $\Gamma$ are subgroups of the Euclidean isometries $\Gamma \le \Iso(\E^n)$ which act properly discontinuously and cocompactly on the space $\E^n$. In short, we will call an action with these two properties crystallographic.  
\begin{Def}
	\label{defcryst}
	A continuous action $\Gamma \curvearrowright X$ of a group $\Gamma$ on a topological space $X$ is called \textbf{crystallographic} if it is both properly discontinuous and cocompact.
\end{Def}
From simplicity of future statements, we will consider \textbf{Euclidean crystallographic actions} $\rho: \Gamma \to \Iso(\E^n)$ instead of considering only crystallographic subgroups of $\Iso(\E^n)$. Due to the classical results of Bieberbach \cite{bieb11-1,bieb12-1} and Zassenhaus \cite{zass48-1}, the groups admitting an Euclidean crystallographic action are exactly the finitely generated virtually abelian groups. This gives a nice geometric interpretation of these groups and moreover, given an Euclidean crystallographic action $\rho: \Gamma \to \Aff(\R^n)$, it is shown that every automorphism $\varphi: \Gamma \to \Gamma$ is induced by an affine transformation $\alpha \in \Aff(\R^n)$ in the same sense as in Theorem \ref{main}.

The natural question afterwards was to allow for a bigger transformation group and try to find similar algebraic characterizations. The first generalization due to L.~Auslander was to take isometric actions on $1$-connected nilpotent Lie groups, so considering actions of the form $\rho: \Gamma \to N \rtimes C$ where $C \le \Aut(N)$ is a maximal compact subgroup. If the action is crystallographic, then the image of the map $\rho$ is called an \textbf{almost crystallographic group} and the generalized Bieberbach theorems \cite{deki96-1} state that the groups admitting such an action are exactly the finitely generated virtually nilpotent groups. In this case, the projection of $\Gamma$ on the second component is a finite subgroup of $\Aut(N)$, which we will use in the examples lateron. Again, every group morphism is induced by an affine transformation on $N$ by the generalized second Bieberbach theorem \cite{deki96-1}.

Around the same time, a second generalization appeared, namely by studying crystallographic actions $\rho: \Gamma \to \Aff(\R^n)$ by affine transformations. In the beginning, it was believed that this would characterize the virtually polycyclic groups, but Y.~Benoist gave an example of a nilpotent group which does not have an affine crystallographic action \cite{beno95-1}. The other direction, namely whether every group which has an affine crystallographic action is virtually polycyclic is still widely open and is known as the \textbf{Auslander Conjecture}, see \cite{abel01-1}.

Because of the example of Y.~Benoist, it was necessary to consider more general actions in order to include all virtually polycyclic groups. The correct generalization is to work not only with affine transformation of $\R^n$ but of a general $1$-connected nilpotent Lie group $N$. The affine group $\Aff(N)$ is defined as $N \rtimes \Aut(N)$, where the action of $\Aff(N)$ on $N$ is given by $$(n,\delta) \cdot x = n \delta(x)$$ for all $x \in N$ and $\left(n, \delta \right) \in \Aff(N)$. An action of the form $\rho: \Gamma \to \Aff(N)$ on $N$ is called a \textbf{NIL-affine action} and it was shown by K. Dekimpe that every virtually polycyclic group does admit a NIL-affine crystallographic action \cite{deki02-1}. Whether every group which has a NIL-affine crystallographic action is automatically virtually polycyclic is a generalized version of the Auslander conjecture and hence also widely open. 
 
Let $N_1, N_2$ be $1$-connected nilpotent Lie groups. We define the set $\aff(N_1,N_2)$ as the elements $\alpha = (x,\delta)$ with $x \in N_2$ and $\delta: N_1 \to N_2$ a continuous morphism between Lie groups. Every element $\alpha \in \aff(N_1,N_2)$ uniquely corresponds to a map $\alpha: N_1 \to N_2$ given by $\alpha(n) = \delta(n) x$. We emphasize here that the translation part $x \in N_2$ is a right translation, contrary to the convention in some other texts, since this simplifies certain results and proofs, in particular the proof of Theorem \ref{fixedpoint}. The set of induced maps $N_1 \to N_2$ does not depend on this convention by the following remark.

\begin{Rmk}
	\label{affinedifferent}
	Let $N_1, N_2$ be $1$-connected nilpotent Lie groups with $x \in N_2$ and $\delta: N_1 \to N_2$ a continuous group morphism. The map $\alpha: N_1 \to N_2$ given by $\alpha(n) = x \delta(n) $ is an element of $\aff(N_1,N_2)$. Indeed, we can write $$\alpha(n) = x \delta(n) = x \delta(n) x^{-1} x  = \tilde{\delta}(n) x$$ where $\tilde{\delta}: N_1 \to N_2$ is the composition of $\delta$ with conjugation by $x$, i.e.~$\tilde{\delta}(n) = x \delta(n) x^{-1}$. We conclude that $\alpha \in \aff(N_1,N_2)$ and vice versa every element in $\aff(N_1,N_2)$ can be written as the composition of an automorphism and a left translation.
\end{Rmk}

The action of $\Aff(N)$ on $N$ on the other hand is always such that $N$ acts by left translations on itself. Since we use $\Aff(N)$ and $\aff(N_1,N_2)$ in two very distinct ways, there will be no confusion in the text which convention is used.

\paragraph{Linear algebraic groups}

For every subfield $K \subseteq \C$ of the complex numbers, we call a subgroup $G \le \GL(n,\C)$ a \textbf{linear algebraic $K$-group} if it is $K$-closed, i.e.~$G$ is the zero set of a finite number of polynomials with coefficients in $K$. For our purposes, $K$ is usually $\Q$ or $\R$. The subgroup $G(K) = G \cap \GL(n,K)$ of \textbf{$K$-rational points} in $G$ is a Zariski-dense subset of $G$, see \cite[Section 34.4]{hump81-1} and we often abuse notations by just working with the group $G(K)$ instead of the full group $G$. In the case $K = \R$ we call the group of $\R$-rational or real points $G(\R)$ a \textbf{real algebraic group}. The connected component for the Zariski topology of the identity element in $G$ is denoted as $G^0$ and this is a normal subgroup of finite index in $G$. A group morphism between two linear algebraic $K$-groups is said to be a \textbf{$K$-morphism} (or defined over $K$) if the coordinate functions are given by polynomials over the field $K$. 

The \textbf{unipotent radical} $U(G)$ is defined as the set of unipotent elements of the maximal closed normal solvable subgroup of $G$. It is always a closed subgroup which is defined over the field $K$. Most linear algebraic groups $G$ we consider are virtually solvable and in this case the unipotent radical $U(G)$ is just given by the set of unipotent elements in $G$. Note that a $K$-morphism $\varphi: G \to G^\prime$ maps unipotent elements to unipotent elements and in particular $\varphi(U(G)) = U(\varphi(G))$. A torus is a linear algebraic group which is isomorphic to a closed subgroup of diagonal matrices $D(n,\C)$. If the unipotent radical of $G$ is trival the group $G$ is called reductive, which in the virtually solvable case means that the group is virtually a torus. A \textbf{Levi subgroup} is a (reductive) linear algebraic subgroup $L \le G$ such that $G$ is the semi-direct product of $L$ and $U(G)$. Since $K$ has characteristic $0$, \cite[Chapter VIII]{hoch81-1} shows that there always exist Levi subgroups defined over $K$ and they are unique up to conjugation by $K$-rational elements. Moreover, every reductive linear algebraic subgroup of $G$ is contained in a Levi subgroup. For more details about these groups, we refer to \cite{bore91-1,hoch81-1,hump81-1}. 

It will be convenient in this paper to not only study NIL-affine actions, but more generally actions by polynomial diffeomorphisms as in \cite{bd00-1}. We denote by $\Pol(\R^n)$ the group of all polynomial bijections of $\R^n$ and by $\Pol^d(\R^n)$ the subset of polynomial bijections $p: \R^n \to \R^n$ in $\Pol(\R^n)$ with both the degree of $p$ and $p^{-1}$ bounded by $d$. It is shown in \cite{bd00-1} that $\Pol^d(\R^n)$ can be equipped with a Zariski topology for every $d \geq 1$ such that the inclusions $\Pol^d(\R^n) \subseteq \Pol^{d+k}(\R^n)$ are closed immersions for all $k \geq 0$. The Zariski closure of any subgroup $\Gamma \subseteq \Pol^d(\R^n)$ is hence a real algebraic group which does not depend on $d$. An action $\rho: \Gamma \to \Pol(\R^n)$ is called \textbf{of bounded degree} if $\rho(\Gamma) \subseteq \Pol^d(\R^n)$ for some $d \geq 1$. Similarly, one can also define the subset $\Pol^d(\Q^n)$ of polynomial diffeomorphisms with coefficients in $\Q$, which is exactly the subset of rational points in $\Pol^d(\R^n)$. 

If $N$ is a $1$-connected nilpotent Lie group, then the group of affine transformations $\Aff(N)$ can be considered as a closed subgroup of $\Pol^c(\R^n)$ where $c$ is the nilpotency class of $N$ and $n$ the dimension of $N$. If $N$ is actually defined over $\Q$ as a linear algebraic group, then we write the subgroup of rational points as $N^\Q = N(\Q)$ and in this case $\Aff(N^\Q) = N^\Q \rtimes \Aut(N^\Q)$ can be considered as a subgroup of $\Pol^c(\Q^n)$ where $c$ is the nilpotency class of $N^\Q $ and $n$ the dimension. In the special case of $N = \R^n$ we write $\Aff(\Q^n) = \Q^n \rtimes \GL(n,\Q)$ as in Theorem \ref{milnor}. \label{NQ} 

\section{Translation-like NIL-affine actions}
\label{sec:trans}

The main tool in this paper is the introduction a specific type of NIL-affine actions, where the unipotent part acts by left translations. In this section we motivate the definition and show that this property behaves well under taking subgroups and restricting to invariant right cosets.

The first geometric description of virtually polycyclic groups was given in \cite{di96-1}, where the authors showed that every such a group admits a polynomial crystallographic action. In \cite{deki02-1} this result is improved by showing that every virtually polycyclic group admits a NIL-affine crystallographic action. Although these actions already have nice properties, the following example due to \cite{fg83-1} shows that even for nilpotent groups they can behave very differently from what we expect.

\begin{Ex}
	\label{polyZ2}
	\label{notaffine}
	Take $\Gamma = \Z^2$ and consider the affine action 
	\begin{align*}
		\rho: \Gamma &\to \Aff(\R^2) = \R^2 \rtimes \GL(2,\R) \\
		(s,t) & \mapsto \left( \begin{pmatrix} s + \frac{t^2}{2}\\ t \end{pmatrix}, \begin{pmatrix} 1 & t \\ 0 & 1 \end{pmatrix} \right).
	\end{align*}
	It is easy to check that $\rho$ is an injective group morphism and that the action of $\Gamma$ on $\R^2$ via $\rho$ is a NIL-affine crystallographic action. This example was also described in \cite{fg83-1}.
\end{Ex}

So even though the group $\Z^2$ is nilpotent (even abelian), there are affine crystallographic actions which are very different from the standard action by left translation. The following definition tries to catch the behavior where a group acts as much as possible by left translations. 

\begin{Def}
	\label{translationlike}
	Let $\rho: \Gamma \to \Aff(N)$ be a NIL-affine action of a virtually polycyclic group. We call the action $\rho$ \textbf{translation-like} if there exists a real linear algebraic group $G \le \Aff(N)$ with $\rho(\Gamma) \le G$ and such that the unipotent radical $U(G)$ acts by left translations on $N$.
\end{Def}
\noindent Note that for this definition it is crucial that we work with real algebraic groups and hence only consider the real points of the linear algebgraic group. 

From now on, we will consider $N \le \Aff(N)$ as the subgroup of left translations. We will also write $L_m \in \Aff(N)$ for $$L_m: N \to N: n \mapsto m n$$ the left translation associated with $m$. The multiplication in $\Aff(N)$ satisfies $ \delta \comp L_n = L_{\delta(n)} \comp \delta $ for every $\delta \in \Aut(N)$ and $n \in N$. We write $\overline{\rho(\Gamma)}$ for the real Zariski closure of the group $\rho(\Gamma)$ in the real algebraic group $\Aff(N)$. An equivalent formulation of translation-like NIL-affine actions is stating that $U \left(\overline{\rho(\Gamma)}\right)$ acts by left translations, in particular $U \left(\overline{\rho(\Gamma)}\right) \le N$ is a subgroup in this case. Theorem \ref{subaction} will show that a properly discontinuous translation-like NIL-affine action is cocompact if and only if $U \left(\overline{\rho(\Gamma)}\right) = N$. 

The class of translation-like actions behaves well under affine conjugation.
\begin{Ex}
	\label{exconjugate}
Let $\rho:\Gamma \to \Aff(N)$ be a translation-like action. For every $\alpha \in \Aff(N)$, we can define a new NIL-affine action $\rho_0: \Gamma \to \Aff(N)$ given by 
\begin{align*}
\rho_0(\gamma) = \alpha \rho(\gamma) \alpha^{-1}.
\end{align*}
We claim that $\rho_0$ is again translation-like. A computation shows that if $L_{m}$ is a left translation with $m \in N$ and $\alpha \in \Aff(N)$ is of the form $\alpha = (n,\delta)$, then  $$\alpha L_m \alpha^{-1} = L_{n \delta(m) n^{-1}}.$$ The conjugation map $$\Aff(N) \to \Aff(N): \beta \mapsto \alpha \beta \alpha^{-1}$$ is a morphism of linear algebraic groups and hence $$U\left( \overline{\rho_0(\Gamma)} \right) = \alpha \hspace{0.5mm} U \left( \overline{\rho(\Gamma)} \right) \alpha^{-1}.$$ Combined this implies that $\rho_0$ is indeed translation-like.
\end{Ex}

\noindent Corollary \ref{affineconjugate} will show that translation-like NIL-affine crystallographic actions are unique up to affine conjugation, so the converse of Example \ref{exconjugate} holds as well.  

Although the natural actions of almost crystallographic groups are translation-like, Example \ref{polyZ2} shows that not every affine crystallographic action is translation-like.

\begin{Ex}
	\label{almostcryst}
	Let $\Gamma$ be an almost crystallographic group, i.e.~it is a subgroup $$\Gamma \le N \rtimes F\le N \rtimes \Aut(N) = \Aff(N)$$ with $F \le \Aut(N)$ a finite subgroup such that $\Gamma$ acts crystallographically on $N$. The action of $\Gamma$ on $N$ is then a translation-like NIL-affine crystallographic action, since $U(N \rtimes F) = N$ and $N$ acts by left translations on itself. In particular, every lattice in a $1$-connected nilpotent Lie group has the natural inclusion as a translation-like NIL-affine crystallographic action. 
	
	If we look at the action $\rho$ of Example \ref{polyZ2}, then we see that the real algebraic closure $G$ of $\rho(\Z^2)$ is equal to 
	\begin{align*}
	G = & \left\{\left( \begin{pmatrix} x + \frac{y^2}{2} \\ y \end{pmatrix}, \begin{pmatrix} 1 & y \\ 0 & 1 \end{pmatrix} \right) \suchthat x, y \in \R \right\} \le \Aff(\R^2).
	\end{align*} 
	So $G$ is a real unipotent subgroup of $\Aff(\R^2)$ but it does not act via left translations, implying that the action $\rho$ is not translation-like.
\end{Ex}
As it follows from the rest of this paper, translation-like NIL-affine crystallographic actions form the good generalization of the well-known concept of almost crystallographic groups for studying group morphisms. For our goals, it is important that translation-like actions behave well under taking subgroups.

\begin{Prop}
Let $\Gamma$ be a virtually polyclyclic group which acts translation-like via the action $\rho: \Gamma \to \Aff(N)$, then every subgroup $\Gamma^\prime \le \Gamma$ acts translation-like as well via the action $$\restr{\rho}{\Gamma^\prime}: \Gamma^\prime \to \Aff(N).$$
\end{Prop}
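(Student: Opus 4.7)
The key observation is that the definition of translation-like demands the existence of \emph{some} real linear algebraic group $G \le \Aff(N)$ witnessing the property, not necessarily the Zariski closure $\overline{\rho(\Gamma)}$ itself. So the plan is to simply inherit the witness $G$ for $\rho$ as a witness for the restriction.

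Concretely, I would first invoke the hypothesis to fix a real linear algebraic group $G \le \Aff(N)$ such that $\rho(\Gamma) \le G$ and $U(G)$ acts by left translations on $N$. Then, since $\Gamma' \le \Gamma$, we immediately have $\restr{\rho}{\Gamma'}(\Gamma') = \rho(\Gamma') \subseteq \rho(\Gamma) \subseteq G$. Hence the same group $G$ (together with its unchanged unipotent radical $U(G)$, which still acts by left translations) serves as the required linear algebraic witness showing that $\restr{\rho}{\Gamma'}$ is translation-like.

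The only conceptual point worth flagging is why one must phrase the argument in terms of the \emph{given} witness $G$ rather than via the equivalent reformulation using $U\bigl(\overline{\rho(\Gamma)}\bigr)$ mentioned after Definition \ref{translationlike}. Although $\overline{\rho(\Gamma')} \le \overline{\rho(\Gamma)}$, taking unipotent radicals is not monotone under inclusion of algebraic subgroups (a unipotent subgroup of a reductive group is an obvious obstruction), so one cannot conclude directly that $U\bigl(\overline{\rho(\Gamma')}\bigr) \le U\bigl(\overline{\rho(\Gamma)}\bigr)$. The definition-based formulation circumvents this entirely: by allowing any containing algebraic group, we retain $G$ throughout and no new verification is needed.

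Given how immediate the argument is, there is no real obstacle; the proof amounts to unwinding the definition and noting the containment $\rho(\Gamma') \subseteq G$.
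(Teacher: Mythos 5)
Your proof is correct for the statement as literally written, but it is not the paper's argument, and your aside about why the Zariski-closure route fails is mistaken. You verify Definition \ref{translationlike} by reusing the witness $G$ for $\Gamma$ as a witness for $\Gamma^\prime$, which is immediate from $\rho(\Gamma^\prime) \subseteq \rho(\Gamma) \subseteq G$. The paper instead argues via the reformulation in terms of Zariski closures: since $\overline{\rho(\Gamma^\prime)} \le \overline{\rho(\Gamma)}$ and both are virtually solvable (being closures of virtually polycyclic groups), their unipotent radicals coincide with their full sets of unipotent elements, whence $U\left(\overline{\rho(\Gamma^\prime)}\right) \le U\left(\overline{\rho(\Gamma)}\right) \le N$ and the former therefore acts by left translations. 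So the monotonicity you declare unavailable is in fact available here: your obstruction (a unipotent subgroup sitting inside a reductive group) cannot occur for these virtually solvable closures, and the paper even appends a remark after the proof making exactly this point about where virtual polycyclicity is used. The difference is not purely cosmetic: the paper's argument delivers the conclusion in the form used downstream, namely that $U\left(\overline{\rho(\Gamma^\prime)}\right)$ itself consists of left translations (cf.~Proposition \ref{newaffine} and Theorem \ref{subaction}), whereas your argument only certifies the existence of \emph{some} containing algebraic group whose unipotent radical lies in $N$; to recover the stronger form from yours one must still invoke the equivalence of the two formulations, and justifying that equivalence requires precisely the virtually-solvable unipotent-radical argument you set aside.
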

\begin{proof}
	Note that $\overline{\rho(\Gamma^\prime)} \le \overline{\rho(\Gamma)}$, so every unipotent element of $\overline{\rho(\Gamma^\prime)}$ lies in  $U \left(\overline{\rho(\Gamma)}\right)$. Hence $U \left(\overline{\rho(\Gamma^\prime)}\right) \le U \left(\overline{\rho(\Gamma)}\right)$ because it is defined as the set of all unipotent elements. The statement follows from the definition of translation-like actions.
\end{proof}

Note that this proof strongly uses the fact that $\Gamma$ is virtually polycyclic, since for general linear algebraic groups $G$ the unipotent radical is not the subset of all unipotent elements, but the unipotent elements of the maximal closed normal solvable subgroup.

\smallskip

Another important tool for our main result is studying restrictions of a NIL-affine crystallographic action to invariant right cosets. These restrictions are again NIL-affine crystallographic actions, moreover restrictions of translation-like actions are translation-like. In the following paragraphs we describe how this method works. 

Let $M \le N$ be a connected Lie subgroup of $N$ and consider a right coset $M n_0$ with $n_0 \in N$. Let $\alpha \in \Aff(N)$ be any affine transformation such that $$\alpha \cdot \left(M n_0 \right) = M n_0.$$ Write $ \alpha = (x,\delta)$ with $x \in N$ and $\delta \in \Aut(N)$ and let $p: M  \to M n_0$ be the natural map, which is given by $p(m) = m n_0$. We describe the map $p^{-1} \comp \alpha \comp p: M \to M$. First note that $n_0 \in M n_0$ and hence $\alpha \cdot n_0 = x \delta(n_0) \in M n_0$ or equivalently that $x \delta(n_0) = m_0 n_0$ for some $m_0 \in M$. Now for any $m n_0 \in M n_0$, we have $$\alpha \cdot \left( m n_0 \right) = x \delta(m n_0)  = x \delta(m) x^{-1} x \delta(n_0) =  x \delta(m) x^{-1} m_0 n_0 = m_0 m_0^{-1}  x \delta(m) x^{-1} m_0 n_0 \hspace{1mm} \in \hspace{1mm} M  n_0.$$ This implies that $p^{-1} \comp \alpha \comp p: M \to M$ is given by $$p^{-1} \comp \alpha \comp p(m) = m_0 m_0^{-1} x \delta(m) x^{-1} m_0$$ and thus is an affine map on $M$ with linear part $m \mapsto m_0^{-1} x \delta(m) x^{-1} m_0$ and translation part $m_0 \in M$. In short, we will write this as $r_M(\alpha) = p^{-1} \comp \alpha \comp p \in \Aff(M)$. Note that this map is only defined after fixing the point $n_0$. 

Consider the subgroup $$H = \{\alpha \in \Aff(N) \mid \alpha \cdot \left(M n_0 \right) = M n_0 \} \le \Aff(N)$$ of elements which map the coset $Mn_0$ to itself. We show that $H$ is a closed subgroup of $\Aff(N)$. Let $\alpha_0 \in \Aff(N)$ be the right translation by $n_0$, i.e.~$$\alpha_0(n) = n n_0 = n_0 n_0^{-1} n n_0,$$ which shows that $\alpha_0$ is indeed an element of $\Aff(N)$ identically as in Remark \ref{affinedifferent}. Showing that $H$ is a closed subgroup is equivalent to showing that $\alpha_0^{-1} H \alpha_0$ is closed. Now the subgroup $\alpha_0^{-1} H \alpha_0$ is equal to $$\alpha_0^{-1} H \alpha_0 = \{\alpha \in \Aff(N) \mid \alpha \cdot M = M  \}.$$ It is easy to see that the latter subgroup is exactly equal to the elements $(x,\delta)$ for which $x \in M$ and $\delta(M) = M$, which form a closed subgroup of $\Aff(N)$.

The natural restriction map $r_M: H \to \Aff(M)$ given by $$r_M(\alpha) = p^{-1} \comp \alpha \comp p$$ defines a morphism between real algebraic groups. We want to emphasize that this map depends on the choice of the point $n_0$, but this will not be important for our results. In particular, unipotent elements of $H$ are mapped to unipotent elements of $\Aff(M)$. Assume that $\rho: \Gamma \to \Aff(N)$ is a NIL-affine action. If $\rho(\Gamma) \le H$, we can consider the \textbf{restricted} NIL-affine action $\rho_M: \Gamma \to \Aff(M)$ which is given by $$\rho_M(\gamma) = r_M \left(\rho(\gamma)\right) = p^{-1} \comp \rho(\gamma) \comp p.$$ In this case, we say that $M n_0$ is invariant under the action $\rho$. The following proposition shows that the restriction behaves well for translation-like actions. 
\label{restrictaffine}

\begin{Prop}\label{newaffine}
Let $\rho: \Gamma \to \Aff(N)$ be a translation-like NIL-affine action and $M n_0$ a right coset which is invariant under the action $\rho$, then the restriction $\rho_M: \Gamma \to \Aff(M)$ is translation-like as well.
\end{Prop}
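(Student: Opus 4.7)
The plan is to use the equivalent formulation of translation-like mentioned in the paragraph after Definition \ref{translationlike}: it suffices to show that $U\!\left(\overline{\rho_M(\Gamma)}\right)$ acts on $M$ by left translations. Let $G = \overline{\rho(\Gamma)}$ be the real Zariski closure. Since $\rho(\Gamma) \le H$ and $H$ has been shown to be Zariski closed in $\Aff(N)$, we have $G \le H$, so the restriction morphism $r_M$ is defined on all of $G$. I then set $G_M := r_M(G)$, which is a real algebraic subgroup of $\Aff(M)$ since $r_M$ is a morphism of real algebraic groups, and $\rho_M(\Gamma) = r_M(\rho(\Gamma)) \le G_M$. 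It then suffices to prove that $U(G_M)$ acts on $M$ by left translations, since the Zariski closure $\overline{\rho_M(\Gamma)}$ is contained in $G_M$ and its unipotent radical, as in the remark after Definition \ref{translationlike}, sits inside $U(G_M)$ (using virtual solvability, see below).

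The key algebraic step is the identity $U(G_M) = r_M(U(G))$. The group $G$ is virtually solvable because it is the Zariski closure of a virtually polycyclic group, hence so is its morphic image $G_M$. In the virtually solvable setting, the unipotent radical of an algebraic group is exactly the set of all unipotent elements (as recalled in the preliminaries), so this reduces to showing that an element $u' \in G_M$ is unipotent if and only if it is the $r_M$-image of a unipotent element of $G$. The forward inclusion is the standard fact that morphisms of algebraic groups preserve unipotency. For the reverse inclusion I would use Jordan decomposition: if $u' = r_M(g)$ is unipotent and $g = g_s g_u$ is the Jordan decomposition in $G$, then $r_M(g) = r_M(g_s) r_M(g_u)$ is the Jordan decomposition in $G_M$, so $r_M(g_s)$ must equal the semisimple part of the unipotent element $u'$, namely $e$, and hence $u' = r_M(g_u)$ with $g_u \in U(G)$.

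Once this identity is established, I translate the hypothesis on $\rho$ into concrete information about $U(G)$: each $u \in U(G)$ is a left translation $L_n$ for some $n \in N$, and since $u \in H$ it preserves $Mn_0$. The condition $nMn_0 = Mn_0$ is equivalent to $n \in M$, so $U(G) \subseteq \{L_m \mid m \in M\}$. A direct computation, analogous to the one done just before the statement of the proposition, gives $r_M(L_n)(m) = p^{-1}(L_n(mn_0)) = p^{-1}(nmn_0) = nm$ for $n, m \in M$, so $r_M(L_n) = L_n$ as an element of $\Aff(M)$ acting by left translation. Combining these two points yields $U(G_M) = r_M(U(G)) \subseteq M$, realized as left translations on $M$, which is exactly the translation-like condition for $\rho_M$.

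The only subtle step is the identification $U(G_M) = r_M(U(G))$; the surrounding facts (closedness of $H$, virtual solvability of $G$, the elementary constraint $nM = M \Leftrightarrow n \in M$) are routine and already essentially established in the excerpt.
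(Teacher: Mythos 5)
Your proposal is correct and follows essentially the same route as the paper: pass to $G_M = r_M\left(\overline{\rho(\Gamma)}\right)$, identify $U(G_M)$ with $r_M\left(U\left(\overline{\rho(\Gamma)}\right)\right)$ using that $r_M$ is a morphism of algebraic groups, and check by direct computation that $r_M(L_n) = L_n$ is a left translation of $M$. The paper simply cites the fact $\varphi(U(G)) = U(\varphi(G))$ from its preliminaries where you supply the Jordan-decomposition argument, and it leaves implicit the observation $n \in M$ that you make explicit; both are harmless elaborations of the same proof.
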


\begin{proof}
Note that $r_M \left(\overline{\rho(\Gamma)} \right)$ is a closed subgroup which contains $\rho_M(\Gamma)$ and hence it suffices to show that the unipotent part of $r_M \left(\overline{\rho(\Gamma)} \right)$ acts by left translations. The unipotent radical of $r_M \left(\overline{\rho(\Gamma)} \right)$ is exactly $r_M \left(U \left(\overline{\rho(\Gamma)}\right) \right)$ since $r_M$ is a morphism of linear algebraic groups. Now the statement is immediate from the definition, since if $\alpha$ is a left translation, meaning that $\alpha(x) = n x$, then $$r_M(\alpha)(m) = p^{-1} \comp \alpha \comp p (m) = p^{-1} (\alpha (m n_0)) = p^{-1} (n m n_0) = n m$$ is again a left translation.
\end{proof}

Note that some of these arguments for constructing the action $\rho_M$ could have been simplified by letting $N \le \Aff(N)$ act by right translations, as we explained in Example \ref{affinedifferent}. But we prefer to maintain the convention that the subgroup $N \le \Aff(N)$ acts by left translations in this paper to avoid possible confusion, especially since left translations play such an important role in Definition \ref{translationlike} above.

\section{$\Q$-algebraic hulls of virtually polycyclic groups}
\label{sec:hull}

In this section, we show that surjective group morphisms between virtually polycyclic groups can be extended to certain linear algebraic groups in which the groups are embedded. This will be the crucial tool for studying general group morphisms between virtually polycyclic groups, by restricting the group morphism to its image to make it surjective. 

The main tool for studying morphisms between lattices $\Gamma$ of a $1$-connected nilpotent Lie group $N$ is the fact that every group morphism $\varphi: \Gamma \to \Gamma$ uniquely extends to a group morphism $\bar{\varphi}: N \to N$ on the Lie group $N$.  In the solvable case, where every lattice is a polycyclic group, this is no longer the case, as can be seen from the following example.

\begin{Ex}
	Consider the solvable Lie group $S = \R^2 \rtimes_\theta \R$ where the action $\theta: \R \to \GL(2,\R)$ is defined as the rotation $$\theta(t) = \begin{pmatrix} \cos(2 \pi t) & -\sin(2 \pi t) \\ \sin(2 \pi t) & \cos(2 \pi t) \end{pmatrix}.$$ Consider the lattice $\Gamma = \Z^2 \rtimes_\theta \Z \subseteq S$, then it follows from the definition of $\theta$ that $\Gamma \cong \Z^3$. The automorphism of $\Gamma$ which permutes the components of $\Z^3$ as the permutation $(1 \hspace{1mm} 2 \hspace{1mm} 3)$ does not extend to an automorphism of $S$.
\end{Ex}

To study group morphisms between virtually polycyclic groups, it turns out to be useful to define the $K$-algebraic hull, as introduced for the first time in \cite{ragh72-1}. Here $K$ is a subfield of the complex numbers, as before.

\begin{Def}
	\label{hull}
Let $\Gamma$ be a virtually polycyclic group. A \textbf{$K$-algebraic hull} of $\Gamma$ is a linear algebraic group $G$ defined over $K$ equipped with an injective group morphism $i: \Gamma \to G$ satisfying the following properties.
	\begin{enumerate}[$(1)$]
		\item The image $i(\Gamma) \le G(K)$ is a Zariski-dense subgroup of $G$;
		\item If $U(G)$ is the unipotent radical of $G$, then $\dim(U(G)) = h(\Gamma)$;
		\item The centralizer of the unipotent radical in $G$ is contained in $U(G)$, or in symbols $$C_{G}(U(G)) = Z(U(G)),$$ where $Z(U(G))$ is the center of $U(G)$.
	\end{enumerate}
\end{Def}
We will usually identify a group $\Gamma$ with its image under the map $i: \Gamma \to G$. Condition $(1)$ of Definition \ref{hull} already implies that the linear algebraic group $G$ is defined over $K$, as it is the Zariski closure of a set of $K$-rational points. For our applications, the fields $K = \Q$ or $K = \R$ will be the most important one. For simplicity, we will denote the Zariski closure of $\Gamma$ as $\overline{\Gamma}$. The following lemma plays a crucial role in the rest of this paper. 
\begin{Lem}
	\label{inequality}
Let $\Gamma$ be a virtually polycyclic subgroup of a linear algebraic group $G$, then $$\dim\left(U\left(\overline{\Gamma}\right) \right) \leq h(\Gamma).$$ 
\end{Lem}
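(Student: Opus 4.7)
My plan is to argue by induction on $h(\Gamma)$. The base case $h(\Gamma) = 0$ is immediate: $\Gamma$ is finite, so $\overline{\Gamma}$ is finite, and $\dim U(\overline{\Gamma}) = 0$. Since $\Gamma$ is virtually polycyclic and hence virtually solvable, the same holds for $\overline{\Gamma}$, and the preliminaries then identify $U(\overline{\Gamma})$ with the set of unipotent elements of $\overline{\Gamma}$; this identification will be used throughout, in particular it gives $U(\overline{\Gamma'}) = U(\overline{\Gamma}) \cap \overline{\Gamma'}$ for any subgroup $\Gamma' \le \Gamma$.

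For the inductive step with $h(\Gamma) \ge 1$, I first reduce to the case where $\Gamma$ admits a normal subgroup $\Gamma' \triangleleft \Gamma$ with $\Gamma/\Gamma' \cong \Z$. Starting from a subnormal series as in the definition of virtually polycyclic, let $i$ be the largest index with $\Gamma_{i+1}/\Gamma_i \cong \Z$, so that $\Gamma_{i+1}$ has finite index in $\Gamma$. Then $\overline{\Gamma_{i+1}}$ has finite index in $\overline{\Gamma}$, they share the same identity component and hence the same unipotent radical, and $h(\Gamma_{i+1}) = h(\Gamma)$. Replacing $\Gamma$ by $\Gamma_{i+1}$ and $\Gamma'$ by $\Gamma_i$, I may assume $\Gamma' \triangleleft \Gamma$ with $\Gamma/\Gamma' \cong \Z$ and $h(\Gamma') = h(\Gamma) - 1$.

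Now $\overline{\Gamma'} \triangleleft \overline{\Gamma}$, and the quotient algebraic group $\overline{\Gamma}/\overline{\Gamma'}$ is the Zariski closure of the cyclic subgroup generated by the image $\bar{t}$ of some $t \in \Gamma$, hence abelian. Writing $\bar{t} = \bar{t}_s \bar{t}_u$ for the Jordan decomposition inside this abelian group and projecting onto the unipotent part, one finds that $U(\overline{\Gamma}/\overline{\Gamma'})$ is the Zariski closure of $\langle \bar{t}_u \rangle$; since $\bar{t}_u$ lies on the unique $1$-parameter unipotent subgroup through it, this closure has dimension at most $1$.

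Finally, the identification $U(\overline{\Gamma'}) = U(\overline{\Gamma}) \cap \overline{\Gamma'}$ together with the second isomorphism theorem gives an embedding of algebraic groups
$$U(\overline{\Gamma})/U(\overline{\Gamma'}) \hookrightarrow \overline{\Gamma}/\overline{\Gamma'}$$
whose image consists of unipotent elements and therefore lies in $U(\overline{\Gamma}/\overline{\Gamma'})$. Combining the resulting bound $\dim U(\overline{\Gamma}) - \dim U(\overline{\Gamma'}) \le \dim U(\overline{\Gamma}/\overline{\Gamma'}) \le 1$ with the inductive hypothesis $\dim U(\overline{\Gamma'}) \le h(\Gamma')$ closes the induction. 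The main obstacle I anticipate is the bound $\dim U(\overline{\Gamma}/\overline{\Gamma'}) \le 1$: one cannot simply argue $\dim(\overline{\Gamma}/\overline{\Gamma'}) \le 1$, which fails for instance when $\bar{t}$ is a generic semisimple element whose Zariski closure is a higher-dimensional torus, so one must genuinely extract the unipotent part via the Jordan decomposition.
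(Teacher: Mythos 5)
The paper does not prove this lemma at all: it simply cites \cite[Lemma 4.36]{ragh72-1}, so there is no in-paper argument to compare against. Your induction on the Hirsch length is correct and is essentially the standard proof behind that citation: the reduction to a finite-index subgroup with an infinite cyclic quotient, the identification of $U(\overline{\Gamma})$ with the set of all unipotent elements (legitimate here since $\overline{\Gamma}$ is virtually solvable, exactly as the paper's preliminaries point out), and the bound $\dim U\left(\overline{\Gamma}/\overline{\Gamma'}\right) \leq 1$ obtained by projecting onto the unipotent part of the abelian closure of $\langle \bar{t} \rangle$ are all sound. You also correctly identified and handled the one place where a careless argument breaks down, namely that $\overline{\Gamma}/\overline{\Gamma'}$ itself may have dimension larger than $1$ when $\bar{t}$ has a nontrivial semisimple part, so the Jordan decomposition step is genuinely needed.
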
 
This is a reformulation of \cite[Lemma 4.36]{ragh72-1}. Condition (2) of Definition \ref{hull} hence states that the unipotent part of $G$ has maximal dimension. In the case of almost crystallographic groups, the notion of $\Q$-algebraic hulls is well-understood and comes from the natural construction of these groups. 

\begin{Ex}
Let $N$ be a $1$-connected nilpotent Lie group and $F \le \Aut(N)$ a finite subgroup of automorphisms. Consider an almost crystallographic group $\Gamma \le N \rtimes F$ as in Example \ref{almostcryst}. We show that if $F$ equals the projection of $\Gamma$ on the second component, then the group $N \rtimes F$ is equal to the real points of the $\Q$-algebraic hull for the group $\Gamma$. 

The first condition is immediate from the condition on the group $F$. The second condition holds since $N$ acts unipotently on itself via left translations and hence $U(N \rtimes F) = N$. For the last condition, consider $(n,f) \in C_{N \rtimes F}(N)$, then $$m = (n,f) m (n,f)^{-1} = n f(m) n^{-1}$$ for all $m \in N$. By looking at the abelianization $\faktor{N}{[N,N]}$, this implies that $f$ is trivial on $\faktor{N}{[N,N]}$. Since $f$ has finite order and in particular is semisimple, we find that $f$ is trivial or hence $(n,f) \in N$. 
\end{Ex}

From the definition it immediately follows how to find a $K$-algebraic hull of a finite index subgroup starting from a $K$-algebraic hull of the whole group.

\begin{Lem}
	\label{finiteindex}
	Let $i: \Gamma \to G$ be the $K$-algebraic hull of a virtually polycyclic group $\Gamma$ and $\Gamma^\prime \le \Gamma$ a subgroup of finite index in $\Gamma$. If we denote by $G^\prime$ the Zariski-closure of $i(\Gamma^\prime)$, then $\restr{i}{\Gamma^\prime}: \Gamma^\prime \to G^\prime$ is a $K$-algebraic hull for $\Gamma^\prime$.
\end{Lem}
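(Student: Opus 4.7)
My plan is to verify the three conditions of Definition \ref{hull} for the injective morphism $\restr{i}{\Gamma'}: \Gamma' \to G'$. Condition (1) is almost free: by construction $i(\Gamma')$ is Zariski dense in $G'$, and since $i(\Gamma') \subseteq i(\Gamma) \subseteq G(K)$ one has $i(\Gamma') \subseteq G(K) \cap G' = G'(K)$; in particular $G'$ is automatically defined over $K$ as the Zariski closure of a set of $K$-rational points.

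The central step is to show that the unipotent radicals of $G$ and $G'$ coincide. I would start by writing $\Gamma = \bigsqcup_{j=1}^n \gamma_j \Gamma'$ as a finite union of cosets. Since Zariski closure commutes with finite unions, and $\overline{\gamma_j \Gamma'} = i(\gamma_j) G'$, taking closures gives
\begin{align*}
G = \overline{i(\Gamma)} = \bigcup_{j=1}^n i(\gamma_j) G',
\end{align*}
so $G'$ has finite index in $G$. The identity component $G^0$ is therefore contained in $G'$, and hence so is the connected unipotent subgroup $U(G)$. Because $U(G)$ is closed, normal and solvable in $G$, it remains normal, solvable, and consisting of unipotent elements when viewed inside $G'$, and is thus contained in the unipotent radical $U(G')$. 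Combining $U(G) \subseteq U(G')$ with Lemma \ref{inequality} and the invariance of the Hirsch length under passage to a finite index subgroup yields
\begin{align*}
h(\Gamma) = \dim U(G) \le \dim U(G') \le h(\Gamma') = h(\Gamma).
\end{align*}
Connectedness of unipotent groups in characteristic zero and equality of dimensions force $U(G') = U(G)$, establishing condition (2).

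For condition (3), the identity $U(G') = U(G)$ together with the hypothesis $C_G(U(G)) = Z(U(G))$ for the hull $G$ yields
\begin{align*}
C_{G'}(U(G')) = C_G(U(G)) \cap G' = Z(U(G)) \cap G' = Z(U(G)) = Z(U(G')),
\end{align*}
where the third equality uses $Z(U(G)) \subseteq U(G) \subseteq G'$. The only subtle point I anticipate is arguing that $U(G) \subseteq G'$ without assuming $\Gamma'$ is normal in $\Gamma$; this is precisely what the coset argument above settles, since finite index in the Zariski topology automatically places $G^0$, and therefore the connected subgroup $U(G)$, inside $G'$.
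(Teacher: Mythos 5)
Your proof is correct and follows essentially the same route as the paper, which simply asserts that $G'$ has finite index in $G$, that $U(G) = U(G')$, and that $h(\Gamma) = h(\Gamma')$, leaving the verification of the three hull conditions to the reader. Your coset-decomposition argument for finite index and the dimension count via Lemma \ref{inequality} are exactly the details the paper's one-line proof leaves implicit.
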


\begin{proof}
	This follows immediately from the definition, since $h(\Gamma) = h(\Gamma^\prime)$ and $U(G) = U(G^\prime)$ as $G^\prime$ is a finite index subgroup of $G$. Note that $G^\prime$ is defined over $K$ since it is defined as the closure of the $K$-rational points $i(\Gamma^\prime) \subset G(K)$. \end{proof}

The following theorem shows the importance of these $K$-algebraic hulls for studying group morphisms between virtually polycyclic groups. 

\begin{Thm}
	\label{extensionQhull}
	Let $\varphi: \Gamma_1 \to \Gamma_2$ be a surjective group morphism between virtually polycyclic groups $\Gamma_1$ and $\Gamma_2$ with $K$-algebraic hulls $i: \Gamma_1 \to G_1$ and $j: \Gamma_2 \to G_2$ respectively. Then $\varphi$ uniquely extends to a morphism $\bar{\varphi}: G_1 \to G_2$ defined over $K$.  
\end{Thm}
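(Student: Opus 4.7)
The plan is to use a graph construction in $G_1 \times G_2$, reducing existence to showing that the natural projection to the first factor is an isomorphism of linear algebraic $K$-groups.

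First, uniqueness is immediate: two $K$-morphisms $G_1 \to G_2$ that agree on $i(\Gamma_1)$ agree on the Zariski closure, which is $G_1$ by condition $(1)$. For existence, let $\Delta = \{(i(\gamma), j(\varphi(\gamma))) \mid \gamma \in \Gamma_1\} \le G_1 \times G_2$, and let $H = \overline{\Delta}$ be its Zariski closure inside the linear algebraic $K$-group $G_1 \times G_2$. Since $\Delta$ consists of $K$-rational points, $H$ is defined over $K$. Denoting the two projections by $\pi_1: H \to G_1$ and $\pi_2: H \to G_2$, I would define $\bar{\varphi} = \pi_2 \comp \pi_1^{-1}$ once $\pi_1$ is shown to be an isomorphism, and the fact that $\bar{\varphi}$ extends $\varphi$ follows by tracing an element $i(\gamma)$ back through the preimage $(i(\gamma), j(\varphi(\gamma)))$.

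The projection $\pi_1$ is surjective because its image is a closed subgroup of $G_1$ containing $i(\Gamma_1)$, hence equal to $G_1$ by $(1)$; the same argument applied to $\pi_2$ uses surjectivity of $\varphi$ to show that $\pi_2(H) = G_2$. The bulk of the work is showing $\pi_1$ is injective. Let $N = \ker \pi_1$, viewed through $\pi_2$ as a closed subgroup of $G_2$ (the restriction $\pi_2\restriction_N$ is injective since elements of $N$ have trivial first coordinate). I would establish injectivity in two steps using all three axioms of the $K$-algebraic hull:

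\textbf{Step A (use of (2) and Lemma \ref{inequality}):} Since $\pi_1(H) = G_1$ and $\pi_1$ is a morphism of linear algebraic groups, $\pi_1(U(H)) = U(G_1)$; by Lemma \ref{inequality} applied to $\Delta \le H$, $\dim U(H) \le h(\Gamma_1) = \dim U(G_1)$ by $(2)$. Combining the two, $\pi_1$ restricts to a surjection between connected unipotent groups of the same dimension, hence an isomorphism. Consequently $N$ contains no nontrivial unipotent elements, i.e.\ $N \cap U(G_2) = \{e\}$.

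\textbf{Step B (use of surjectivity of $\varphi$ and $(3)$):} The surjectivity of $\varphi$ is what makes $N$ normal in $G_2$: conjugation of an element $(e,y) \in N$ by $(i(\gamma), j(\varphi(\gamma))) \in \Delta$ yields $(e, j(\varphi(\gamma)) y j(\varphi(\gamma))^{-1})$, so under $\pi_2$ the subgroup $N$ is normalized by $j(\Gamma_2) = j(\varphi(\Gamma_1))$, and passing to Zariski closures it is normalized by $G_2$. Since $N$ and $U(G_2)$ are both normal in $G_2$ with trivial intersection, $[N, U(G_2)] \subseteq N \cap U(G_2) = \{e\}$, so $N \subseteq C_{G_2}(U(G_2)) = Z(U(G_2))$ by $(3)$. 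But $Z(U(G_2)) \subseteq U(G_2)$, and combined with Step A this forces $N = \{e\}$.

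The main obstacle is Step B, specifically recognizing that one must exploit the surjectivity hypothesis on $\varphi$ precisely at the point where normality of $N$ in $G_2$ is required (without it, $N$ would only be normalized by $j(\Gamma_2)$ through a subgroup, which is insufficient for condition $(3)$ to bite). Once both steps go through, $\pi_1$ is a bijective $K$-morphism of linear algebraic $K$-groups, hence an isomorphism defined over $K$ (the inverse of a bijective morphism of linear algebraic groups in characteristic zero is a morphism defined over the same field), so $\bar{\varphi} = \pi_2 \comp \pi_1^{-1}$ is the desired extension.
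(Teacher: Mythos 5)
Your proof is correct and follows essentially the same route as the paper: the graph of $\varphi$ in $G_1 \times G_2$, surjectivity of both projections (with surjectivity of $\varphi$ entering for the second one), the dimension count via Lemma \ref{inequality} and condition $(2)$ to make $\pi_1$ an isomorphism on unipotent radicals, and condition $(3)$ to kill the kernel. The only difference is cosmetic: where you show $\ker \pi_1$ centralizes $U(G_2)$ by observing that two normal subgroups with trivial intersection commute, the paper argues element-wise that each $h \in \ker p_1$ is semisimple and centralizes $U(H)$, then concludes $p_2(h) = e$ from semisimplicity --- both arguments land in $C_{G_2}(U(G_2)) = Z(U(G_2)) \subseteq U(G_2)$ and finish identically.
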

This is a generalization of \cite[Lemma 4.41.]{ragh72-1} which gives this result for automorphisms only. The proof of \cite[Lemma 4.41.]{ragh72-1} works in this more general situation but for the sake of completeness and to demonstrate where surjectivity plays a role, we recall the proof here.

\begin{proof}
	To simplify notations, we will identify $\Gamma_1$ and $\Gamma_2$ with their images under the maps $i$ and $j$. Consider the subgroup $$ \Gamma =  \left\{(\gamma,\varphi(\gamma)) \in G_1\times G_2 \mid \gamma \in \Gamma_1\right\} \le G_1\times G_2$$ and denote its Zariski closure as $H$. The group $H$ is a linear algebraic group defined over $K$ since both $\Gamma_1\le G_1(K)$ and $\Gamma_2 \le G_2(K)$. Consider the natural projection maps $p_1: H \to G_1, \hspace{1mm} p_2: H \to G_2$ which are again defined over $K$. 
	
	Note that $\Gamma_1\le p_1(H) \le G_1$ and $\Gamma_2 \le p_2(H) \le G_2$, where we use the surjectivity of $\varphi$ for the second statement. Since $\Gamma_1$ and $\Gamma_2$ are Zariski-dense in $G_1$ and $G_2$ respectively and $p_1(H), p_2(H)$ are closed subgroups, we find that both $p_1$ and $p_2$ are surjective morphisms. In particular, $$p_i(U(H)) = U(p_i(H)) = U(G_i)$$ for $i \in \{1,2\}$. From Lemma \ref{inequality} we get that $$\dim(U(H)) \leq h(\Gamma) = h(\Gamma_1) = \dim (U(G_1)) = \dim(p_1(U(H))) \leq \dim(U(H))$$ and hence that $p_1$ induces an isomorphism between $U(H)$ and $U(G_1) = p_1(U(H))$. 
	
	Now we continue exactly as in the proof of \cite{ragh72-1}. We claim that the kernel of $p_1$ is trivial. Indeed, assume that $p_1(h) = e$ for $h \in H$. The element $h$ is necessarily semisimple because $$\restr{p_1}{U(H)}: U(H) \to U(G_1)$$ is an isomorphism. For every $u \in U(H)$, we have that $p_1(h u h^{-1}) = p_1(u)$. Since both $u, h u h^{-1} \in U(H)$ and again using the property that $p_1$ induces an isomorphism between $U(H)$ and $U(G)$, we get that $h u h^{-1} = u$ for all $u \in U(H)$. By applying the map $p_2$, we find that $p_2(h) \in C_{G_2}(U(G_2))$ and therefore $p_2(h) \in U(G_2)$ because $G_2$ is a $K$-algebraic hull of $\Gamma_2$. Since $p_2(h)$ is semisimple, we find that $p_2(h) = e$ and together with $p_1(h) = e$ this implies $h = e$.
	
	Since the kernel of $p_1$ is trivial, $p_1$ forms an isomorphism defined over $K$. The morphism $\bar{\varphi} = p_2 \comp p_1^{-1}$ then satisfies the conditions of the theorem. In the same way it is immediate to check that this extension is unique.
\end{proof}

As a standard application, we conclude that a $K$-algebraic hull of a virtually polycyclic group is unique up to $K$-isomorphism, so \textit{$K$}-algebraic hull is well-defined in this case. As another immediate consequence we show that every injective endomorphism on a fixed group $\Gamma$ can be extended as well, which is useful for studying co-hopfian properties of virtually polycyclic groups.

\begin{Cor}
	Let $\Gamma$ be a virtually polycyclic group and $i: \Gamma \to G$ a $K$-algebraic hull of $\Gamma$. Every injective group morphism $\varphi: \Gamma \to \Gamma$ uniquely extends to a morphism $\bar{\varphi}: G \to G$ defined over $K$.
\end{Cor}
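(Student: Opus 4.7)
The plan is to factor $\varphi$ through its image and apply Theorem \ref{extensionQhull} to the resulting surjection $\varphi: \Gamma \twoheadrightarrow \varphi(\Gamma)$. The central observation driving the argument is that $\varphi(\Gamma)$ is automatically a \emph{finite-index} subgroup of $\Gamma$: injectivity of $\varphi$ gives $h(\varphi(\Gamma)) = h(\Gamma)$, and for a virtually polycyclic group any subgroup of full Hirsch length has finite index. For polycyclic groups this is standard, obtained by intersecting with a subnormal series and noting that each $\Z$-subquotient can only be replaced by a finite-index subgroup when Hirsch length is preserved; the virtually polycyclic case reduces to this after passing to a polycyclic finite-index subgroup of $\Gamma$.

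Once $\varphi(\Gamma)$ is known to have finite index in $\Gamma$, Lemma \ref{finiteindex} applies directly: writing $G'' = \overline{i(\varphi(\Gamma))}$ for the Zariski closure in $G$, the restricted embedding $i|_{\varphi(\Gamma)}: \varphi(\Gamma) \to G''$ is a $K$-algebraic hull of $\varphi(\Gamma)$. Now Theorem \ref{extensionQhull} can be invoked with the surjection $\varphi: \Gamma \twoheadrightarrow \varphi(\Gamma)$, source hull $i$ and target hull $i|_{\varphi(\Gamma)}$, yielding a morphism $\bar{\varphi}: G \to G''$ defined over $K$ with $\bar{\varphi}(i(\gamma)) = i(\varphi(\gamma))$ for every $\gamma \in \Gamma$. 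Composing with the inclusion $G'' \hookrightarrow G$, which is a $K$-morphism since $G''$ is $K$-closed in $G$, produces the required extension $\bar{\varphi}: G \to G$.

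Uniqueness is inherited from the uniqueness part of Theorem \ref{extensionQhull}, or equivalently from the Zariski density of $i(\Gamma)$ in $G$, which forces any two $K$-morphisms $G \to G$ agreeing on $i(\Gamma)$ to coincide. The only substantive step is the finite-index observation in the first paragraph; everything after that is a routine invocation of the already developed machinery, so no essential new obstacle arises.
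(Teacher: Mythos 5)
Your proposal is correct and follows essentially the same route as the paper: observe that $\varphi(\Gamma)$ has finite index in $\Gamma$, invoke Lemma \ref{finiteindex} to get a $K$-algebraic hull for the image, apply Theorem \ref{extensionQhull} to the surjection $\Gamma \to \varphi(\Gamma)$, and compose with the inclusion. The only difference is that you spell out the (standard) Hirsch-length argument for the finite-index claim, which the paper states without justification.
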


\begin{proof}
The image $\varphi(\Gamma)$ is a subgroup of finite index in $\Gamma$ and hence $\overline{\varphi(\Gamma)}$ is the $K$-algebraic hull of $\varphi(\Gamma)$ by Lemma \ref{finiteindex}. Since $\varphi: \Gamma \to \varphi(\Gamma)$ is surjective we get a morphism $\bar{\varphi}: G \to \overline{\varphi(\Gamma)}$ defined over $K$ and by composing by the natural inclusion $\overline{\varphi(\Gamma)} \hookrightarrow G$ the statement follows.
\end{proof}

The following examples demonstrate that a general group morphism does not extend to a morphism between $\Q$-algebraic hulls, hence the results stated above are the best possible ones.
\begin{Ex}
	Let $\Gamma = \langle a, b \mid b a b^{-1} = a^{-1} \rangle $ be the fundamental group of the Klein bottle. This is an almost crystallographic group in $\Iso(\R^2)$, where the map $i: \Gamma \to \Iso(\R^2)$ is given by 
	\begin{align*}
	i(a) &= \left( \begin{pmatrix} 1 \\ 0 \end{pmatrix} , \begin{pmatrix} 1 & 0 \\ 0 & 1 \end{pmatrix} \right) \\
	i(b) &= \left( \begin{pmatrix} 0 \\ \frac{1}{2} \end{pmatrix} , \begin{pmatrix} -1 & 0 \\ 0 & 1 \end{pmatrix} \right) .
	\end{align*}
	 Consider the injective group morphism $\varphi: \Z \to \Gamma$ which maps $1$ to $b$. The $\Q$-algebraic hulls of $\Z$ and $\Gamma$ are equal to $\C$ and $\C^2 \rtimes \Z_2$ where $\Z_2$ acts as $$\begin{pmatrix} - 1 & 0 \\ 0 & 1 \end{pmatrix}$$ on $\C^2$. There does not exist a morphism $\psi: \C \to \C^2 \rtimes \Z_2$ between linear algebraic groups which is an extension of $\varphi$ since $1$ is unipotent in $\C$ but $i(b)$ is not unipotent in $\C^2 \rtimes \Z_2$. 
\end{Ex}

Given the $\Q$-algebraic hull for virtually polycyclic groups, the results of this section allow us to study certain group morphisms between the groups. At the moment, it is not clear which virtually polycyclic groups have a $\Q$-algebraic hull, but we give a full answer for this question at the end of the next section.

\section{Constructing cocompact actions}
\label{sec:comp}
Theorem \ref{extensionQhull} of the previous section only gives us extensions to the $\Q$-algebraic hull for surjective group morphisms. The key ingredient to prove Theorem \ref{main} for a group morphism $\varphi: \Gamma_1 \to \Gamma_2$ is to consider the action of the image $\varphi(\Gamma_1) \subset \Gamma_2$ on $N_2$. This action might not be cocompact anymore, but in this section we give a way to construct a new crystallographic action from just a properly discontinuous action of a virtually polycyclic group. 

To study when an action is cocompact, the following fact from algebraic topology is important.
\begin{Prop}
	\label{cocompact}
	Let $\Gamma$ be a virtually polycyclic group which acts properly discontinuously on $\R^k$, then the Hirsch length $h(\Gamma)$ satisfies $h(\Gamma) \leq k$ with equality if and only if the action is cocompact.
\end{Prop}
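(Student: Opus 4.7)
The plan is to reduce to the torsion-free case and then exploit the well-known equality between the Hirsch length of a torsion-free polycyclic group and its cohomological dimension, together with Poincaré duality for such groups.

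First, I would choose a torsion-free subgroup $\Gamma^\prime \le \Gamma$ of finite index, which exists because every virtually polycyclic group is virtually torsion-free (Selberg's lemma applied to a faithful linear representation, e.g.~the one from Theorem \ref{milnor}). Since $h(\Gamma^\prime) = h(\Gamma)$ and the action of $\Gamma^\prime$ on $\R^k$ remains properly discontinuous and, by torsion-freeness, also free, the quotient $X = \Gamma^\prime \backslash \R^k$ is a smooth $k$-manifold without boundary. Since $\R^k$ is contractible, $X$ is a $K(\Gamma^\prime,1)$, so $H^\ast(\Gamma^\prime;\Z) \cong H^\ast(X;\Z)$ and, for any coefficient module, the group cohomology of $\Gamma^\prime$ is computed by the (co)homology of $X$. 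The action is cocompact precisely when $X$ is compact.

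Next, I would invoke the classical fact that a torsion-free polycyclic group $\Gamma^\prime$ is an orientable Poincaré duality group of formal dimension equal to its Hirsch length $h(\Gamma^\prime)$; in particular its cohomological dimension is exactly $h(\Gamma^\prime)$, and $H^{h(\Gamma^\prime)}(\Gamma^\prime;\Z) \cong \Z$. This follows from the subnormal series: each infinite cyclic quotient contributes $1$ to the dimension and cohomology climbs by a Hochschild-Serre spectral sequence argument; the statement is standard and can be cited from Bieri's monograph on duality groups. Combining this with the previous paragraph gives a non-vanishing class in $H^{h(\Gamma)}(X;\Z)$. Since $X$ is a $k$-dimensional manifold, no cohomology can appear above degree $k$, so $h(\Gamma) \le k$.

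Finally, I would handle the equality. If $X$ is compact, then $X$ is a closed aspherical $k$-manifold, so $H^k(X;\Z/2) \neq 0$, which forces $h(\Gamma) \ge k$, giving equality. Conversely, if $h(\Gamma) = k$, I need to deduce compactness. Here I use that for a connected, non-compact $k$-manifold without boundary, $H^k(X;\Z)$ vanishes (indeed $H_c^k(X;\Z)$ is what can be non-zero, while ordinary $H^k$ is killed by non-compactness, as may be seen via the standard argument that non-compact $k$-manifolds deformation retract onto a $(k{-}1)$-dimensional subcomplex). But the Poincaré duality step above produced a non-zero class in $H^k(\Gamma^\prime;\Z) \cong H^k(X;\Z)$, a contradiction; hence $X$ must be compact, so the $\Gamma^\prime$-action and thus also the $\Gamma$-action is cocompact.

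The main obstacle, and the step that does most of the work, is the Poincaré duality input: getting a nonzero top-degree cohomology class for $\Gamma^\prime$ and knowing that non-compact open $k$-manifolds have no top integral cohomology. Everything else (passing to a torsion-free subgroup, aspherical quotient, and the final contradiction) is routine once these facts are in place.
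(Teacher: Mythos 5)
Your proof is correct in substance and reaches the same structural endgame as the paper's, but it obtains the crucial input by a genuinely different route. Both arguments pass to a torsion-free finite-index subgroup, observe that the quotient of $\R^k$ is an aspherical $k$-manifold $X$, and derive cocompactness from the tension between a non-vanishing top-degree (co)homology class of the group and the vanishing of top-degree (co)homology of a non-compact $k$-manifold. The difference is where the non-vanishing class comes from: the paper compares $X$ with a \emph{compact} aspherical $k$-manifold $M_2$ having the same fundamental group (a geometric existence input, essentially the infra-solvmanifold/iterated mapping torus construction) and uses $H_k(M_2)\cong\Z$ together with Hatcher's Proposition 3.29 on non-compact manifolds; you instead invoke Bieri's purely algebraic theorem that torsion-free polycyclic groups are Poincar\'e duality groups of formal dimension equal to the Hirsch length. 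Your route has the advantage of not presupposing any compact geometric model (and it also yields the inequality $h(\Gamma)\le k$ and the ``cocompact implies equality'' direction, which the paper simply declares well known), at the cost of quoting a somewhat heavier algebraic theorem and of needing the fact that an open $k$-manifold has the homotopy type of a complex of dimension at most $k-1$.

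One inaccuracy to repair: you assert that $\Gamma^\prime$ is an \emph{orientable} duality group with $H^{h(\Gamma^\prime)}(\Gamma^\prime;\Z)\cong\Z$, but a torsion-free polycyclic group need not be an orientable $\mathrm{PD}$-group (the Klein bottle group is poly-$\Z$ of Hirsch length $2$ with $H^2(\Gamma^\prime;\Z)\cong\Z/2$). This does not break the argument, since in the non-orientable case the top cohomology is $\Z/2$, which is still non-zero; alternatively, pass to a further subgroup of index $2$ to restore orientability (as the paper does), or run the whole argument with $\Z/2$ coefficients, for which $H^{h(\Gamma^\prime)}(\Gamma^\prime;\Z/2)\neq 0$ holds unconditionally. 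With that adjustment the proof is complete.
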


\begin{proof}
The fact that $h(\Gamma) \leq k$ and that if the action is cocompact, then $h(\Gamma) = k$ are well-known. So the only statement left to check is that if $h(\Gamma) = k$, then the action is cocompact. By taking a finite index subgroup of $\Gamma$ if necessary, we can assume that the action of $\Gamma$ on $\R^k$ is free. The quotient space $M_1 =  \faktor{\R^{k}}{\Gamma}$ is then an aspherical manifold of dimension $k$ and we show that it is necessarily compact. 

The group $\Gamma$ is the fundamental group of a compact aspherical manifold $M_2$ of dimension $k$. By taking a subgroup of index $2$ in $\Gamma$ we can moreover assume that $M_2$ is an orientable manifold. Since the manifolds $M_1$ and $M_2$ have the same homotopy type by \cite[Theorem 1B.8]{hatc02-1}, there is an isomorphism between the $k$-th homology groups, so $H_k(M_1) \cong H_k(M_2)$. Since $M_2$ is orientable, we get that $H_k(M_2) = \Z$. If $M_1$ would not be compact, then $H_k(M_1) = 0$ by \cite[Proposition 3.29]{hatc02-1}, which is a contradiction. 
\end{proof}

In the rest of this section, we fix a properly discontinuous action $\rho: \Gamma \to \Pol(\R^n)$ of a virtually polycyclic group on $\R^n$ such that $\rho(\Gamma) \le \Pol^d(\R^n)$ for some $d \geq 1$. We write $$\gamma \cdot x = \rho(\gamma)(x)$$ for $\gamma \in \Gamma$ and $x \in \R^n$. Let $G = \overline{\rho(\Gamma)} \le \Pol^d(\R^n)$ be the real Zariski closure of $\rho(\Gamma)$ in $\Pol^d(\R^n)$ and define the subgroups $L(\Gamma), \hspace{0,5mm} U(\Gamma) \le G$ as respectively a Levi subgroup and the unipotent radical of $G$. Note that the group $L(\Gamma)$ is not unique, but this has no further consequences for our purposes. In particular $G = U(\Gamma) L(\Gamma)$ with a natural morphism $L(\Gamma) \to \Aut(U(\Gamma))$ of real algebraic groups given by conjugation. Hence there exists a NIL-affine action of $G$ on $U(\Gamma)$ via $$(v l) \cdot u = v l u l^{-1}$$ for all $u, v \in U(\Gamma), \hspace{1mm} l \in L(\Gamma)$, leading to a translation-like NIL-affine action of $\Gamma$ on $U(\Gamma)$. Multiplication in $G$ of $u_1 l_1$ and $u_2 l_2$ with $u_i \in U(\Gamma)$ and $l_i \in L(\Gamma)$ is given by \begin{align} \label{multi} u_1 l_1 u_2 l_2 = \underbrace{u_1 l_1 u_2 l_1^{-1}}_{\in U(\Gamma)} \underbrace{l_1 l_2}_{\in L(\Gamma)}.\end{align} 

The following theorem is a crucial tool for our results. It gives us a crystallographic action starting from only a properly discontinuous action.

\begin{Thm}
	\label{subaction}
	Let $\Gamma$ be a virtually polycyclic group and $\rho: \Gamma \to \Pol(\R^n)$ a properly discontinuous action on $\R^n$ of bounded degree. Assume that $x_0$ is a fixed point of the Levi group $L(\Gamma)$, then the subgroup $U(\Gamma)$ has dimension $h(\Gamma)$ and the group $\Gamma$ acts crystallographically on the closed subset $U(\Gamma) \cdot x_0$. The natural map $$p: U(\Gamma) \to U(\Gamma) \cdot x_0$$ is a homeomorphism which forms a conjugation between the translation-like NIL-affine action of $\Gamma$ on $U(\Gamma)$ and the given action of $\Gamma$ on $U(\Gamma) \cdot x_0$. 
\end{Thm}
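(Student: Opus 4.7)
The plan is to analyse the orbit map \(p: U(\Gamma) \to U(\Gamma) \cdot x_0\) through its stabilizer \(H = \{u \in U(\Gamma) : u \cdot x_0 = x_0\}\), and to force the dimension equality \(\dim U(\Gamma) = h(\Gamma)\) together with crystallographicity out of a single two-sided dimension squeeze.

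First I would exploit that \(L(\Gamma)\) fixes \(x_0\) and that \(U(\Gamma)\) is normal in \(G = U(\Gamma) L(\Gamma)\) to identify \(U(\Gamma) \cdot x_0\) with the full \(G\)-orbit of \(x_0\). Because \(\rho(\Gamma) \le G\), this set is \(\rho(\Gamma)\)-invariant, and the Kostant--Rosenlicht theorem on orbits of unipotent algebraic groups acting on affine varieties guarantees it is closed in \(\R^n\); the restricted \(\Gamma\)-action is therefore still properly discontinuous. Next I would analyse \(H\): it is Zariski-closed in the connected unipotent algebraic group \(U(\Gamma)\), and since in characteristic zero such groups admit no nontrivial finite algebraic subgroups, \(H\) must be connected. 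The standard fact that a quotient of a simply connected nilpotent Lie group by a closed connected subgroup is diffeomorphic to a Euclidean space then identifies \(U(\Gamma)/H\) with \(\R^{m-h}\), where \(m = \dim U(\Gamma)\) and \(h = \dim H\); the orbit map descends to a continuous bijection \(U(\Gamma)/H \to U(\Gamma) \cdot x_0\).

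The decisive step is to combine Lemma \ref{inequality}, which gives \(m \leq h(\Gamma)\), with Proposition \ref{cocompact} applied to the properly discontinuous \(\Gamma\)-action on \(U(\Gamma)/H \cong \R^{m-h}\), which gives \(h(\Gamma) \leq m - h\). The two bounds pinch, forcing \(h = 0\) and \(h(\Gamma) = m\); the equality case of Proposition \ref{cocompact} simultaneously yields cocompactness of the \(\Gamma\)-action on \(U(\Gamma) \cdot x_0\). Hence \(H\) is trivial, \(p\) is a continuous bijection between equidimensional smooth manifolds, and a rank-of-differential argument upgrades it to a homeomorphism. Finally, the conjugation \(\rho(\gamma) \comp p = p \comp \rho'(\gamma)\) with the NIL-affine action \(\rho': \Gamma \to \Aff(U(\Gamma))\) defined in the paragraph preceding the theorem is a direct computation: writing \(\rho(\gamma) = u_\gamma l_\gamma\) with \(u_\gamma \in U(\Gamma)\), \(l_\gamma \in L(\Gamma)\), and using \(l_\gamma \cdot x_0 = x_0\) together with the multiplication formula (\ref{multi}), one obtains \(\rho(\gamma) \cdot (u \cdot x_0) = u_\gamma l_\gamma u l_\gamma^{-1} \cdot x_0 = p(\rho'(\gamma)(u))\).

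The main obstacle is precisely this dimension pinch: the two bounds come from very different worlds (the algebraic-geometric bound of Lemma \ref{inequality} and the topological bound of Proposition \ref{cocompact}), and the whole argument hinges on their being tight simultaneously. A secondary subtlety, without which the pinch would not even be set up, is the connectedness of \(H\); if \(H\) had nontrivial component group, the identification \(U(\Gamma)/H \cong \R^{m-h}\) would fail and Proposition \ref{cocompact} could not be invoked.
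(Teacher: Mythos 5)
Your proposal is correct and takes essentially the same route as the paper: the identical dimension squeeze between Lemma \ref{inequality} and Proposition \ref{cocompact} forces $\dim U(\Gamma) = h(\Gamma)$, triviality of the stabilizer, and cocompactness simultaneously, and the final conjugation computation via equation (\ref{multi}) is the same. The only cosmetic difference is that you re-derive the facts that the orbit is closed and homeomorphic to a Euclidean space (via Kostant--Rosenlicht and connectedness of the stabilizer in a unipotent group) where the paper simply cites \cite[Lemma 2]{bd00-1}.
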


This proposition should be seen as a generalization of \cite[Proposition 2.]{bd00-1} and \cite[Lemma 2.9.]{dp15-1} which consider the situation of crystallographic actions. From \cite[Lemma 2.]{bd00-1} it follows that the orbit $U(\Gamma) \cdot x$ is a closed subset of $\R^n$. In the special case of crystallographic actions it holds that $U(\Gamma) \cdot x = \R^n$ since $U(\Gamma) \cdot x$ is a closed subset of dimension $n$ in $\R^n$ and hence equal to the whole space.

\begin{proof}
 From Lemma \ref{inequality} it follows that $\dim(U(\Gamma))  \leq h(\Gamma)$. Consider the subset $$X = U(\Gamma) \cdot x_0 = U(\Gamma) L(\Gamma) \cdot x_0 = G \cdot x_0$$ which is homeomorphic to $\R^k$ for some $0 \leq k \leq \dim(U(\Gamma))$ by \cite[Lemma 2.]{bd00-1}. Since $X$ is the orbit of $x_0$ under the action of the real algebraic closure $G = \overline{\Gamma}$, it is invariant under $\Gamma$ as well, or thus $\Gamma$ indeed acts on the subset $X$. 
 
 Since $\Gamma$ acts properly discontinuously on $\R^n$ and a fortiori on $X$, this implies by Proposition \ref{cocompact} that $k \geq  h(\Gamma)$ and by the previous inequalities we conclude that $k = h(\Gamma)$. In particular, the action of $\Gamma$ on $X$ is cocompact by again applying Proposition \ref{cocompact}. Moreover, the kernel of the action of $U(G)$ on $X$ must have dimension $0$, implying that the action is free. The map $p: U(\Gamma) \to X: u \mapsto p(u) = u \cdot x_0 $ is hence a homeomorphism between $U(\Gamma)$ and $X$. 
 
 By using the multiplication given by equation (\ref{multi}) above, we get that the polynomial diffeomorphism $p: U(\Gamma) \to X$ satisfies $$(v l) \cdot p(u) = (v l) \cdot \left(u \cdot x_0 \right) = \left( v l u \right) \cdot x_0 =  \left(v l u l^{-1}  l \right) \cdot x_0 = \left( (v l) \cdot u \right) \cdot x_0 = p((v l) \cdot u)$$ and so the map $p$ conjugates between the action $\rho$ and an affine action of $\Gamma$ on $U(\Gamma)$. Note that $U(\Gamma)$ acts by left translations for this action, so the action of $\Gamma$ on $X$ is indeed a translation-like NIL-affine crystallographic action.
\end{proof}

In the case where $\rho$ is a translation-like NIL-affine action, the subset $U(\Gamma) \cdot x_0$ is equal to a right coset $U(\Gamma) x_0$. Hence under this assumption the restriction of the action $\rho_{U(\Gamma)}: \Gamma \to \Aff \left(U(\Gamma)\right)$ is again a translation-like NIL-affine action, see Proposition \ref{newaffine}, which is moreover crystallographic. This will be the main tool for achieving the main result of this paper.

To apply Theorem \ref{subaction} we are mainly interested in the case where $L(\Gamma)$ has fixed points in $\R^n$. It seems like the following problem is still open in full generality and is known as the \textit{Algebraic Fixed Point Problem}, see for example \cite{kraf95-1,kraf89-1}.

\begin{AFP}
Does every real reductive group in $\Pol^d(\R^n)$ have a fixed point?
\end{AFP}

There are two interesting cases for which such fixed points always exist and these results will suffice for our purposes. The first is when we are working with subgroups of affine transformations, the second for $L(\Gamma)$ when $\Gamma$ acts crystallographically.

\begin{Lem}
	\label{reductivefixed}
	Every reductive subgroup of $\Aff(N)$ has a fixed point $n_0 \in N$. 
\end{Lem}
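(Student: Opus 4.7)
My plan is to argue by induction on the nilpotency class $c$ of $N$.

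For the base case $c = 1$, $N = \R^n$ is abelian and $\Aff(N) = \R^n \rtimes \GL(n,\R)$. I would first show that $H \cap \R^n = \{0\}$: the intersection is a closed normal unipotent subgroup of the reductive group $H$, so its identity component lies in the trivial unipotent radical $U(H) = \{1\}$, whence $H \cap \R^n$ is finite; being a finite subgroup of the torsion-free group $\R^n$, it must be trivial. Hence the projection $\pi \colon H \to \GL(n,\R)$ is an isomorphism of real algebraic groups onto $\bar H := \pi(H)$, with algebraic inverse $s(\bar h) = (t(\bar h), \bar h)$; the map $t$ satisfies the cocycle identity $t(\bar h_1 \bar h_2) = t(\bar h_1) + \bar h_1 \, t(\bar h_2)$, and a fixed point $v \in \R^n$ of $H$ amounts to the equation $t(\bar h) = (1 - \bar h) v$ for every $\bar h \in \bar H$. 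To produce such a $v$, consider the $(n+1)$-dimensional rational representation of $\bar H$ on $\R^n \oplus \R$ defined by $\bar h \cdot (w,\lambda) = (\bar h w + \lambda \, t(\bar h), \lambda)$. The hyperplane $\R^n \oplus 0$ is $\bar H$-invariant, and complete reducibility of the reductive group $\bar H$ in characteristic zero supplies an invariant complement, necessarily of the form $\R \cdot (v, 1)$ for some $v \in \R^n$. The invariance of this line reads exactly $t(\bar h) = (1 - \bar h) v$, giving the desired fixed point.

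For the inductive step, let $N_c$ be the last nontrivial term of the lower central series of $N$. Being central it is abelian, being closed and connected in the $1$-connected group $N$ it is isomorphic to some $\R^k$, and being characteristic in $N$ it is preserved by every $\delta \in \Aut(N)$. The quotient $q \colon N \to M := N/N_c$ therefore induces a morphism of real algebraic groups $\Pi \colon \Aff(N) \to \Aff(M)$ sending $(x, \delta)$ to $(q(x), \bar\delta)$, and $M$ has nilpotency class $c - 1$. The image $\Pi(H)$ is reductive since in characteristic zero quotients of reductive groups by closed normal subgroups remain reductive, so by the induction hypothesis there is a fixed point $\bar{n_0} \in M$. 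Lifting to any $n_0 \in q^{-1}(\bar{n_0})$, the right coset $n_0 N_c$ is $H$-invariant, and applying the restriction construction of Proposition \ref{newaffine} yields an algebraic morphism $H \to \Aff(N_c)$ whose image is again reductive. Applying the base case to this image produces a fixed point in $N_c$, which via the identification $z \leftrightarrow n_0 z$ gives the required fixed point of $H$ in $N$.

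The main obstacle is the base case, which is essentially the vanishing of the algebraic first cohomology $H^1(\bar H, \R^n)$ for a reductive real algebraic group $\bar H$ acting rationally; the representation-theoretic splitting trick above handles this cleanly in characteristic zero. The inductive step is then largely bookkeeping, relying on $N_c$ being characteristic so that $\Pi$ is an algebraic morphism, on reductivity being inherited by images, and on the restriction framework already developed in Section \ref{sec:trans} to transport the $H$-action on $n_0 N_c$ to an affine action on the vector group $N_c$.
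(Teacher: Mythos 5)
Your proof is correct, but it follows a genuinely different route from the paper's. The paper's argument is a two-line reduction to structure theory already quoted in the preliminaries: by \cite[Chapter VIII]{hoch81-1}, every reductive subgroup of $\Aff(N)$ is contained in a Levi subgroup, all Levi subgroups are conjugate, and a Levi subgroup $L \le \Aut(N)$ of $\Aut(N)$ is also one of $\Aff(N) = N \rtimes \Aut(N)$ (since $U(\Aff(N)) = N \rtimes U(\Aut(N))$); as $L$ fixes $e \in N$, every reductive subgroup fixes the image of $e$ under the conjugating affine transformation. You instead reprove the relevant special case of this conjugacy theorem by hand: induction on the nilpotency class, with the base case being the vanishing of $H^1(\bar H,\R^n)$ for a rational representation of a reductive group, obtained from complete reducibility applied to the extension $0 \to \R^n \to \R^{n+1} \to \R \to 0$ determined by the cocycle $t$, and the inductive step passing through $N/N_c$ and the invariant coset $n_0 N_c$ via the restriction construction of Section \ref{sec:trans} (strictly speaking you want the discussion preceding Proposition \ref{newaffine}, i.e.\ the morphism $r_M$, rather than that proposition itself, which concerns translation-like actions). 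Your argument is longer but essentially self-contained modulo linear reductivity in characteristic zero, and it makes visible the cohomological reason the fixed point exists; the paper's proof is shorter but leans entirely on the cited Levi--Mostow conjugacy theorem. Both are valid.
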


\begin{proof}
	Fix a Levi subgroup $L \le \Aut(N)$ and note that $L$ is a Levi subgroup of $\Aff(N)$ as well because $N$ is a unipotent normal subgroup of $\Aff(N) = N \rtimes \Aut(N)$. Since every reductive subgroup of $\Aff(N)$ is conjugate to a subgroup of $L$, it suffices to show that $L$ has a fixed point. The statement now follows immediately since $e \in N$ is a fixed point for $L$.
\end{proof}

\begin{Lem}
	\label{fixedpoly}
	Suppose that $\Gamma$ is a virtually polycyclic subgroup of $\Pol(\R^n)$ of bounded degree which acts crystallographically on $\R^n$, then every reductive subgroup of $\overline{\Gamma}$ has a fixed point. 
\end{Lem}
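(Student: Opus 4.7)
The plan is to reduce the lemma to Lemma~\ref{reductivefixed}. Write $G = \overline{\Gamma}$. Since every reductive linear algebraic subgroup of $G$ is contained in some Levi subgroup and all Levi subgroups of $G$ are conjugate by elements of $G$, it suffices to produce a fixed point for a single Levi subgroup $L \le G$; any other reductive subgroup will then have a fixed point obtained by translating this one.

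The first step is to establish that the unipotent radical $U(G)$ acts with an orbit of full dimension $n$ on $\R^n$. From the crystallographic hypothesis and Proposition~\ref{cocompact} one has $h(\Gamma) = n$, so Lemma~\ref{inequality} gives $\dim U(G) \le n$. Each orbit $U(G)\cdot x$ is closed in $\R^n$ by \cite[Lemma~2]{bd00-1}, so if one of them has dimension exactly $n$ then by connectedness it must equal $\R^n$. Hence it suffices to exhibit such a full-dimensional $U(G)$-orbit; the cocompactness of the $\Gamma$-action, combined with the decomposition $G = U(G)L$ and the fact that $\Gamma$ is Zariski-dense in $G$, forces the orbits of $U(G)$ through points of a fundamental domain to be as large as possible. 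This is essentially the content of \cite[Proposition~2]{bd00-1}, which I would invoke (or re-prove by the same method as Theorem~\ref{subaction}, inverting the direction of that argument). Once such an $x_0$ is found, one concludes from $\dim U(G) \le n$ that the stabilizer $U(G)_{x_0}$ is a zero-dimensional closed subgroup of a unipotent group, hence trivial, so $U(G)$ acts simply transitively on $\R^n$.

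The second step then transports the fixed-point problem back to a NIL-affine situation where Lemma~\ref{reductivefixed} applies. Since $U(G)$ acts simply transitively, the orbit map $\iota: U(G) \to \R^n$, $u \mapsto u \cdot x_0$ is a diffeomorphism. Using the multiplication formula~(\ref{multi}) for $G = U(G)L$, the push-forward of the $G$-action along $\iota^{-1}$ is the NIL-affine action of $G$ on $U(G)$ in which $U(G)$ acts by left translations and $L$ acts by automorphisms. The reductive subgroup $L$ of $\Aff(U(G))$ therefore admits a fixed point $u_* \in U(G)$ by Lemma~\ref{reductivefixed}, and $\iota(u_*) \in \R^n$ is the desired $L$-fixed point.

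The main obstacle is the first step, namely showing that $U(G)$ has a full-dimensional orbit on $\R^n$. The dimension inequality from Lemma~\ref{inequality} gives only an upper bound, and to upgrade it to equality one must couple the closedness of $U(G)$-orbits with the cocompactness of the $\Gamma$-action; this is precisely the delicate input that distinguishes the crystallographic case from the merely properly discontinuous one treated in Theorem~\ref{subaction}, where the corresponding statement had to be assumed rather than derived.
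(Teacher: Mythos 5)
Your overall architecture matches the paper's: show that $U(\overline{\Gamma})$ acts simply transitively on $\R^n$, deduce that the stabilizer of a point is a Levi complement, and conclude by conjugacy of Levi subgroups. Your second step is essentially fine, though the paper does it more directly: once $U(\Gamma)$ acts simply transitively, the stabilizer $L'$ of $x_0$ in $G$ satisfies $U(\Gamma)\cap L' = \{e\}$ and $U(\Gamma)L' = G$, so $L'$ is itself a Levi subgroup that fixes $x_0$ by construction; there is no need to transport the action back to $\Aff(U(G))$ and invoke Lemma \ref{reductivefixed} a second time. The problem is your first step, which you yourself flag as ``the main obstacle'': it contains a genuine gap, and it is the heart of the lemma. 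The method of Theorem \ref{subaction} cannot be ``inverted.'' That argument produces a full-dimensional orbit $U(\Gamma)\cdot x_0$ only because $x_0$ is assumed fixed by a Levi subgroup, which is exactly what makes $G\cdot x_0 = U(\Gamma)L\cdot x_0 = U(\Gamma)\cdot x_0$ a $\Gamma$-invariant set to which Proposition \ref{cocompact} can be applied. Without an invariant orbit, cocompactness of $\Gamma$ on all of $\R^n$ tells you nothing about the dimension of any individual $U(G)$-orbit; deriving the fixed point from transitivity while deriving transitivity from the fixed point is circular.

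The paper breaks this circle with one specific observation that is absent from your proposal: since $G$ is virtually solvable, the identity component $L^0$ of a Levi subgroup $L$ is a torus, and a torus acting polynomially on $\R^n$ always has a fixed point (\cite[Lemma 3]{bd00-1}). Such a fixed point $x_0$ of $L^0$ lets one run the Theorem \ref{subaction} argument for the group $L^0 U(\Gamma)$ and the corresponding finite-index subgroup of $\Gamma$ (which still acts crystallographically and has the same Hirsch length and the same unipotent radical), yielding $U(\Gamma)\cdot x_0 = \R^n$ with free $U(\Gamma)$-action; from there either your step two or the paper's stabilizer argument finishes. Citing \cite[Proposition 2]{bd00-1} for simple transitivity, as you suggest, would also formally close the gap, but it imports precisely the nontrivial content rather than proving it; if you want a self-contained argument, the torus fixed-point lemma is the missing ingredient.
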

\begin{proof}
Take $G$ the real Zariski closure of $\Gamma$ and let $L(\Gamma)$ be a Levi subgroup of $G$. Since $G$ is virtually solvable, the connected component $L^0$ of $L$ is a torus. From \cite[Lemma 3]{bd00-1} it follows that $L^0$ has a fixed point $x_0 \in \R^n$. Hence similarly as just below the statement of Theorem \ref{subaction} for the group $L^0 U(\Gamma)$ we get that $U(\Gamma) \cdot x_0 = \R^n$ and moreover this action of $U(\Gamma)$ on $\R^n$ is free.

Take the stabilizer $L^\prime$ of the element $x_0$ in the group $G$. The group $L^\prime$ is a closed subgroup of $G$ with $U(\Gamma) \cap L^\prime = \{e\}$. Because the action of $U(\Gamma)$ on $\R^n$ is transitive, it follows that $U(\Gamma) L^\prime = G$ and hence $L^\prime$ is a Levi subgroup of $G$. Since $L^\prime$ by definition has a fixed point and any other reductive subgroup is conjugate to a subgroup of $L^\prime$, the lemma follows.
\end{proof}

\noindent As a consequence of the proof of Lemma \ref{fixedpoly} we get the following observation, which will be useful for characterizing the possibilities for $\alpha$ in Theorem \ref{main}.

\begin{Lem}
	\label{stablevi}
Let $\rho: \Gamma \to \Pol(\R^n)$ be a properly discontinuous action of bounded degree of a virtually polycyclic group $\Gamma$. If $\Gamma$ acts cocompactly on the set $U(\Gamma) \cdot x$ for an element $x \in \R^n$, then the stabilizer of $x$ in the real Zariski closure $\overline{\rho(\Gamma)}$ is a Levi subgroup of $\overline{\rho(\Gamma)}$.
\end{Lem}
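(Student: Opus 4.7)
The strategy is to extract the argument from the proof of Lemma \ref{fixedpoly} and apply it to the orbit $U(\Gamma) \cdot x$ in place of all of $\R^n$. Set $G = \overline{\rho(\Gamma)}$ and let $L^\prime \le G$ denote the stabilizer of $x$. I aim to verify the two defining properties of a Levi subgroup: $G = U(\Gamma) \cdot L^\prime$ with trivial intersection, together with reductivity of $L^\prime$.

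First, by \cite[Lemma 2.]{bd00-1} the orbit $U(\Gamma) \cdot x$ is homeomorphic to some $\R^k$, and since $\Gamma$ acts properly discontinuously and cocompactly on it, Proposition \ref{cocompact} forces $k = h(\Gamma)$. Combining $\dim U(\Gamma) \geq \dim(U(\Gamma) \cdot x) = k$ with Lemma \ref{inequality} yields $\dim U(\Gamma) = k = h(\Gamma)$. The fiber of the orbit map $U(\Gamma) \to U(\Gamma) \cdot x$ above $x$, which is $U(\Gamma) \cap L^\prime$, is then a $0$-dimensional closed subgroup of the connected real unipotent group $U(\Gamma)$, hence trivial. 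Next, for any $\gamma \in \Gamma$ the containment $\gamma \cdot x \in \Gamma \cdot x \subseteq U(\Gamma) \cdot x$ lets me write $\gamma \cdot x = u \cdot x$ with $u \in U(\Gamma)$, so $\gamma \in U(\Gamma) \cdot L^\prime$, giving $\Gamma \subseteq U(\Gamma) \cdot L^\prime$.

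It remains to promote this inclusion to an equality. Since $U(\Gamma)$ is a closed normal subgroup of $G$ and $L^\prime$ is closed, standard structure theory of linear algebraic groups (the image of $L^\prime$ in the algebraic quotient $G/U(\Gamma)$ is closed, with $U(\Gamma) \cdot L^\prime$ its preimage) shows that $U(\Gamma) \cdot L^\prime$ is Zariski closed in $G$, and the Zariski density of $\Gamma$ then forces $U(\Gamma) \cdot L^\prime = G$. Finally, since $G$ is virtually solvable, $U(\Gamma)$ coincides with the set of all unipotent elements of $G$, so the triviality of $L^\prime \cap U(\Gamma)$ means $L^\prime$ has no non-trivial unipotent elements and is therefore reductive. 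Combined with $G = U(\Gamma) \cdot L^\prime$ and trivial intersection, this is exactly the assertion that $L^\prime$ is a Levi subgroup of $G$. The single step requiring real care is the Zariski-closedness of $U(\Gamma) \cdot L^\prime$, which is what bootstraps the set-theoretic inclusion $\Gamma \subseteq U(\Gamma) \cdot L^\prime$ into a genuine equality of algebraic groups; the rest is dimension bookkeeping together with the observation that, in the virtually solvable setting, reductive is equivalent to unipotent-free.
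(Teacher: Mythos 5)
Your proof is correct and follows essentially the same route as the paper, which obtains this lemma as a by-product of the proof of Lemma \ref{fixedpoly}: the dimension count via Proposition \ref{cocompact} and Lemma \ref{inequality} forces the $U(\Gamma)$-action on the orbit to be free, whence $U(\Gamma) \cap L^\prime = \{e\}$, and reductivity of $L^\prime$ follows because in the virtually solvable case $U(\Gamma)$ exhausts the unipotent elements of $G$. The one step you handle differently is the equality $G = U(\Gamma) L^\prime$: the paper gets it directly from transitivity of $U(\Gamma)$ on the orbit (the set $U(\Gamma)\cdot x$ is closed and $\Gamma$-invariant, hence $G$-invariant, so every $g \in G$ satisfies $g \cdot x = u \cdot x$ for some $u \in U(\Gamma)$, i.e.\ $g \in U(\Gamma) L^\prime$), whereas you deduce it from Zariski density of $\Gamma$ in the closed subgroup $U(\Gamma) L^\prime$. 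Your justification of closedness through the quotient $G/U(\Gamma)$ is the one fragile point: for \emph{real} algebraic groups the image of the real points of a closed subgroup under a quotient morphism need not be Zariski closed (squaring on $\R^{*}$ is the standard example), so it is safer to note that $U(\Gamma) L^\prime = \{ g \in G \mid g \cdot x \in U(\Gamma)\cdot x\}$, which is closed because the orbit $U(\Gamma)\cdot x$ is closed by \cite[Lemma 2.]{bd00-1} --- or simply to use the direct transitivity argument, which sidesteps the issue and also delivers the $G$-invariance of the orbit that your argument implicitly assumes when writing $\gamma \cdot x \in U(\Gamma)\cdot x$.
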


Combining the partial results for the Algebraic Fixed Point Problem and Theorem \ref{subaction} shows that every crystallographic action is conjugate to a translation-like NIL-affine crystallographic action.

\begin{Cor}
	\label{everypoly}
	Let $\Gamma$ be a virtually polycyclic group and $\rho: \Gamma \to \Pol(\R^n)$ a crystallographic action of bounded degree. Then $\rho$ is polynomially conjugate to a translation-like NIL-affine crystallographic action. 
\end{Cor}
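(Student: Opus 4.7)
The plan is to combine the partial result on the Algebraic Fixed Point Problem (Lemma \ref{fixedpoly}) with the NIL-affine construction from Theorem \ref{subaction}. First, I would let $G = \overline{\rho(\Gamma)}$ denote the real Zariski closure of $\rho(\Gamma)$ in $\Pol^d(\R^n)$ and fix a Levi subgroup $L(\Gamma) \le G$. Since $\rho$ is crystallographic of bounded degree, Lemma \ref{fixedpoly} supplies a fixed point $x_0 \in \R^n$ of $L(\Gamma)$, which is precisely the hypothesis needed to feed into Theorem \ref{subaction}.

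Next, I would invoke Theorem \ref{subaction} at this $x_0$ to obtain a translation-like NIL-affine action $\rho': \Gamma \to \Aff(U(\Gamma))$ on the $1$-connected nilpotent Lie group $U(\Gamma)$ together with the map $p: U(\Gamma) \to U(\Gamma) \cdot x_0$, $u \mapsto u \cdot x_0$, intertwining $\rho'$ with the restriction of $\rho$ to the orbit. To upgrade this to a conjugacy on all of $\R^n$, I would show that $U(\Gamma) \cdot x_0 = \R^n$: since $\rho$ is crystallographic, Proposition \ref{cocompact} gives $h(\Gamma) = n$; Theorem \ref{subaction} then says that $U(\Gamma) \cdot x_0$ is homeomorphic to $\R^{h(\Gamma)} = \R^n$ and, by \cite[Lemma 2]{bd00-1}, closed in $\R^n$, so it must be all of $\R^n$. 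In particular the constructed action $\rho'$ on $U(\Gamma)$ is itself crystallographic, as required by the statement.

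The final step is to confirm that the bijection $p: U(\Gamma) \to \R^n$ gives a \emph{polynomial} conjugacy when $U(\Gamma)$ is coordinatised by a Mal'cev basis. That $p(u) = u \cdot x_0$ is polynomial of bounded degree is immediate from $\rho(\Gamma) \subseteq \Pol^d(\R^n)$ and $U(\Gamma) \le \overline{\rho(\Gamma)}$. I expect the main obstacle to be the polynomiality of the inverse $p^{-1}$, but this can be handled along the lines of the proof of \cite[Proposition 2]{bd00-1}: since $U(\Gamma)$ is unipotent and acts freely and transitively on $U(\Gamma) \cdot x_0$ by polynomial diffeomorphisms of bounded degree, one can recover the Mal'cev coordinates of $p^{-1}(x)$ as polynomials in $x$ by propagating along a Mal'cev filtration of $U(\Gamma)$ and inverting one unipotent coordinate at a time. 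This yields the desired polynomial conjugacy between $\rho$ and the translation-like NIL-affine crystallographic action $\rho'$.
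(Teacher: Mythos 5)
Your proposal is correct and takes essentially the same route as the paper's own (very terse) proof: a fixed point of a Levi subgroup from Lemma \ref{fixedpoly}, Theorem \ref{subaction} applied at that point, the observation that the closed orbit $U(\Gamma)\cdot x_0$ of dimension $h(\Gamma)=n$ must be all of $\R^n$, and the polynomiality of the conjugating map $p$ and its inverse. Your extra detail on inverting $p$ via Mal'cev coordinates is consistent with the reference to \cite{bd00-1} that the paper leaves implicit.
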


\begin{proof}
By taking any fixed point $x_0 \in \R^n$ of a Levi subgroup $L(\Gamma)$, it follows that the set $U(\Gamma) \cdot x_0 = \R^n$. The concrete form of the map $p$ of Theorem \ref{subaction} implies that the action $\rho$ is polynomially conjugate to a translation-like NIL-affine crystallographic action.
\end{proof}

As an illustration of Corollary \ref{everypoly} we give the exact polynomial for Example \ref{polyZ2}.

\begin{Ex}
	\label{conjugatepoly}
	Consider the NIL-affine crystallographic action $\rho: \Z^2 \to \Aff(\R^2)$ from Example \ref{polyZ2}. Take the polynomial diffeomorphism $p: \R^2 \to \R^2$ given by $$p(x,y) = \left( x - \frac{y^2}{2}, y \right),$$ with inverse $p^{-1}(x,y) = \left( x + \frac{y^2}{2}, y \right)$. Conjugating the action $\rho$ by $p$ gives us $$p \comp \rho(s,t) \comp p^{-1} = \left( \begin{pmatrix} s \\ t \end{pmatrix}, \begin{pmatrix} 1 & 0 \\ 0 & 1 \end{pmatrix} \right) $$ which corresponds to the standard action of $\Z^2$ by left translation on $\R^2$.
\end{Ex}

Combining the previous with Theorem \ref{milnor}, we find a proof for Theorem \ref{main2}, with even an extra condition on the action of $\rho$. Recall that the group $\Aff(N^\Q)$ in the statement was introduced on page \pageref{NQ}.
\begin{Thm}
	\label{existtl}
	For every virtually polycyclic group $\Gamma$, there exists a translation-like NIL-affine crystallographic action $$\rho: \Gamma \to \Aff(N)$$ with $N$ a simply connected and connected nilpotent Lie group. Moreover, we can assume that $N$ is defined over $\Q$ and that $\rho(\Gamma) \le \Aff(N^\Q)$.
\end{Thm}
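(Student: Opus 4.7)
The plan is to combine Milnor's affine embedding (Theorem \ref{milnor}) with Theorem \ref{subaction} by means of a rational refinement of Lemma \ref{reductivefixed}. First I would invoke Theorem \ref{milnor} to obtain a properly discontinuous action $\rho_0 \colon \Gamma \to \Aff(\Q^n) \subseteq \Aff(\R^n)$. Since $\rho_0(\Gamma)$ sits inside the $\Q$-rational points, its real Zariski closure $G = \overline{\rho_0(\Gamma)} \le \Aff(\R^n)$ is a real algebraic group defined over $\Q$. Using the Hochschild--Mostow theorem on Levi decompositions over fields of characteristic zero (as recalled in Section~\ref{prel}), I choose a Levi subgroup $L(\Gamma) \le G$ defined over $\Q$, with unipotent radical $U(\Gamma)$ also defined over $\Q$.

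The key extra ingredient is a rational version of Lemma \ref{reductivefixed}, namely the claim that $L(\Gamma)$ admits a fixed point in $\Q^n$ rather than merely in $\R^n$. To prove it, note that $\Aff(\R^n) = \R^n \rtimes \GL(n,\R)$ has unipotent radical $\R^n$, so $\GL(n,\R)$ is a Levi subgroup of $\Aff(\R^n)$ defined over $\Q$, and it fixes $0 \in \Q^n$. Since $L(\Gamma)$ is a $\Q$-defined reductive subgroup of $\Aff(\R^n)$, it is contained in some $\Q$-defined Levi subgroup of $\Aff(\R^n)$; but any two $\Q$-defined Levi subgroups of $\Aff(\R^n)$ are conjugate by a $\Q$-rational element. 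Hence there exists $\alpha \in \Aff(\Q^n)$ with $\alpha^{-1} L(\Gamma) \alpha \le \GL(n,\R)$, and therefore $x_0 := \alpha \cdot 0 \in \Q^n$ is a fixed point of $L(\Gamma)$.

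With $x_0 \in \Q^n$ fixed by $L(\Gamma)$ in hand, I apply Theorem \ref{subaction} to $\rho_0$: the orbit $U(\Gamma) \cdot x_0$ is a closed subset on which $\Gamma$ acts crystallographically, and the natural map $p \colon U(\Gamma) \to U(\Gamma) \cdot x_0$ conjugates $\rho_0|_{U(\Gamma)\cdot x_0}$ to a translation-like NIL-affine crystallographic action $\rho \colon \Gamma \to \Aff(N)$ with $N := U(\Gamma)$. The group $N$ is a $1$-connected nilpotent Lie group because unipotent linear algebraic groups in characteristic zero are $1$-connected, and it carries a natural $\Q$-structure since $U(\Gamma)$ is defined over $\Q$. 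Finally, tracing rationality through the construction --- $\rho_0(\Gamma) \subseteq \Aff(\Q^n)$, $x_0 \in \Q^n$, and $L(\Gamma), U(\Gamma)$ defined over $\Q$ --- the formula for $p$ and the multiplication formula \eqref{multi} show that the resulting $\rho(\Gamma)$ lies in $\Aff(N^\Q)$, which yields the stronger conclusion of the theorem.

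The main obstacle is the rational refinement of the fixed-point lemma: a Levi subgroup can be defined over $\Q$ yet \emph{a priori} only fix real points, and one has to exploit the rationality of the Levi decomposition of the ambient group $\Aff(\R^n)$ to bring the fixed point into $\Q^n$. Once this is established, the rest is a direct application of Theorem \ref{subaction} combined with bookkeeping of $\Q$-structures.
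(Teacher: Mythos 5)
Your proposal is correct and follows essentially the same route as the paper: Milnor's rational affine action, a $\Q$-defined Levi subgroup with a rational fixed point obtained from the $\Q$-conjugacy of Levi subgroups (which is exactly how the paper justifies "$x_0 \in \Q^n$ ... by the same arguments as in Lemma \ref{reductivefixed}"), and then Theorem \ref{subaction} plus $\Q$-rationality bookkeeping for the conjugating map $p$. The only difference is that you spell out the rational fixed-point step in more detail than the paper does, which is a welcome clarification rather than a deviation.
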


\begin{proof}
Let $\Gamma$ be a virtually polycyclic group and $\rho: \Gamma \to \Aff(\R^n)$ with $\rho(\Gamma) \le \Aff(\Q^n)$ a properly discontinuous action, which exists by Theorem \ref{milnor}. The real algebraic closure $G$ of $\rho(\Gamma)$ is defined over $\Q$, thus also the unipotent radical $U(G)$ is defined over  $\Q$. Moreover there exists a Levi subgroup of $G$ which is defined over $\Q$. Take $x_0 \in \Q^n$ a fixed point for the Levi subgroup of $G$, which exists by exactly the same arguments as in Lemma \ref{reductivefixed}. 

The group $\Gamma$ acts crystallographically on $X = U(G) \cdot x_0$ and the latter action is conjugate to a translation-like NIL-affine action on $N = U(G)$ by Theorem \ref{subaction}. The conjugation map of Theorem \ref{subaction} is defined over $\Q$ and hence the rational points of $G$ are conjugate to the rational points of $\Aff(U(G))$, leading to the last condition.
\end{proof}

 As an immediate consequence, we have the existence of a $\Q$-algebraic hull for virtually polycyclic group for which every finite normal subgroup is trivial, by following the proof of \cite{bd00-1}.
\begin{Thm}
	\label{closurehull}
	Let $\Gamma$ be a virtually polycyclic group and $\rho: \Gamma \to \Pol(\R^n)$ a polynomial crystallographic action of bounded degree $d$. The inclusion of $\rho(\Gamma)$ in $\overline{\rho(\Gamma)} \le \Pol^d(\R^n)$ is a $\R$-algebraic hull for $\rho(\Gamma)$. If we assume furthermore that the image $\rho(\Gamma) \le \Pol^d(\Q^n)$, then it is in fact a $\Q$-algebraic hull for $\rho(\Gamma)$. 
\end{Thm}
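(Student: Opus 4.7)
The plan is to verify the three conditions of Definition~\ref{hull} for the inclusion $i: \rho(\Gamma) \hookrightarrow G := \overline{\rho(\Gamma)} \le \Pol^d(\R^n)$. Condition~(1) holds by construction, as $\rho(\Gamma)$ is Zariski-dense in $G$. For the $\Q$-rational variant, if $\rho(\Gamma) \le \Pol^d(\Q^n)$ then $G$ is the Zariski closure of a set of $\Q$-rational points, so it is automatically defined over $\Q$ and satisfies $\rho(\Gamma) \le G(\Q)$, upgrading (1) to its $\Q$-rational form.

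For condition~(2), Lemma~\ref{inequality} immediately gives $\dim(U(G)) \le h(\rho(\Gamma)) = h(\Gamma)$, where the equality uses that proper discontinuity forces $\ker(\rho)$ to be finite. For the reverse inequality I would appeal to Lemma~\ref{fixedpoly}, which in the crystallographic setting supplies a point $x_0 \in \R^n$ fixed by a Levi subgroup of $G$; then Theorem~\ref{subaction} directly yields $\dim(U(G)) = h(\Gamma)$. One moreover concludes that $U(G) \cdot x_0 = \R^n$, since this orbit is a closed subset of $\R^n$ of full dimension.

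Condition~(3) is the step I expect to cost the most effort. The polynomial homeomorphism $p: U(G) \to \R^n$, $u \mapsto u \cdot x_0$ from Theorem~\ref{subaction} lets me conjugate $G$ by $p$ and realize it as a subgroup of $\Aff(U(G))$ inside which $U(G)$ acts by left translations; this is justified by the identity $(vl) \cdot p(u) = p(vlul^{-1})$ established in the proof of Theorem~\ref{subaction}. Under this identification, any $g \in C_G(U(G))$ commutes with every left translation of $U(G)$, and setting $u_0 := g \cdot e$ forces $g$ to act on $U(G)$ as the right translation $u \mapsto u u_0$. Inside $\Aff(U(G)) = U(G) \rtimes \Aut(U(G))$ this reads $g = (u_0, c_{u_0^{-1}})$, where $c_{u_0^{-1}}$ denotes conjugation by $u_0^{-1}$ on $U(G)$.

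To finish condition~(3), I would then exploit the algebraic structure of $G$. Since $(u_0, e) \in U(G) \subset G$, the element $(e, c_{u_0^{-1}}) = (u_0, e)^{-1} g$ also lies in $G$. Because $u_0$ belongs to the nilpotent group $U(G)$, the automorphism $c_{u_0^{-1}}$ is unipotent, making $(e, c_{u_0^{-1}})$ a unipotent element of $G$. The virtually polycyclic assumption on $\Gamma$ makes $G$ virtually solvable, so all unipotent elements of $G$ lie in $U(G)$; but $U(G) \cap \Aut(U(G)) = \{e\}$ inside $\Aff(U(G))$, which forces $c_{u_0^{-1}} = e$. Thus $u_0 \in Z(U(G))$ and therefore $g = u_0 \in Z(U(G))$, delivering condition~(3) and completing the verification.
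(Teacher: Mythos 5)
Your proof is correct. Conditions (1) and (2) are handled exactly as in the paper: density is built into the definition of $G$, the $\Q$\-/rationality remark is the same, and the dimension count combines Lemma \ref{inequality} with Theorem \ref{subaction} applied at a fixed point furnished by Lemma \ref{fixedpoly}. For condition (3), however, you take a genuinely different route. The paper stays on $\R^n$: writing $G = L(\Gamma)U(\Gamma)$, it notes that an element $l$ of the Levi subgroup centralizing $U(\Gamma)$ fixes every point of the orbit $U(\Gamma)\cdot x = \R^n$, because $l\cdot(u\cdot x) = u\cdot(l\cdot x) = u\cdot x$, and hence is the identity polynomial map. You instead transport $G$ into $\Aff(U(G))$ via $p$, observe that an arbitrary $g \in C_G(U(G))$ must be the right translation by $u_0 = g\cdot e$, and kill the conjugation part $c_{u_0^{-1}}$ by exhibiting $(e,c_{u_0^{-1}})$ as a unipotent element of the virtually solvable group $G$ with trivial translation part, which must then lie in $U(G)$ (the left translations) and so be trivial. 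Both arguments ultimately rest on the simple transitivity of the $U(G)$-action obtained from cocompactness; the paper's version is shorter and avoids invoking the semidirect-product structure of $\Aff(U(G))$, while yours treats an arbitrary element of $C_G(U(G))$ head-on rather than reducing to Levi elements (a reduction the paper leaves implicit), and as a bonus it identifies the centralizer of the left translations concretely as the central right translations. Every step of your unipotency argument checks out, so this is a valid, slightly more structural alternative to the published proof.
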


\begin{proof}
The first condition is immediate and condition $(2)$ follows from Theorem \ref{subaction}, since every Levi subgroup has a fixed point by Lemma \ref{fixedpoly}. For condition $(3)$, write $G = \overline{\rho(\Gamma)} = L(\Gamma) U(\Gamma)$ and assume that $l \in L(\Gamma)$ centralizes $U(\Gamma)$. Take $x \in \R^n$ a fixed point for $L(\Gamma)$, so in particular $l \cdot x = x$. Every other point in $U(\Gamma) \cdot x$ is a fixed point for $l$ as well, because $$l \cdot \left(u \cdot x\right) = (l u) \cdot x = (u l) \cdot x = u \cdot (l \cdot x) = u \cdot x.$$ Since $U(\Gamma) \cdot x = \R^n$ we get that $l$ must be trivial. The condition $\rho(\Gamma) \le \Pol^d(\Q^n)$ implies that  $\overline{\rho(\Gamma)}$ is defined over $\Q$.
\end{proof}

As an easy application of the previous theorem, we get the following.

\begin{Cor}
	\label{existQhull}
	Let $\Gamma$ be a virtually polycyclic group such that every finite normal subgroup is trivial, then $\Gamma$ has a $\Q$-algebraic hull $i: \Gamma \to G$.
\end{Cor}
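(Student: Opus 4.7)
The plan is to combine Theorem \ref{existtl} with Theorem \ref{closurehull}: the former produces a translation-like NIL-affine crystallographic action on a $\Q$-defined nilpotent Lie group with rational image, and the latter identifies the Zariski closure of such an action as a $\Q$-algebraic hull, provided the action is faithful. The only thing to arrange is injectivity of the action, which is precisely where the hypothesis on finite normal subgroups enters.

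Concretely, I would first apply Theorem \ref{existtl} to obtain a translation-like NIL-affine crystallographic action $\rho: \Gamma \to \Aff(N)$ with $N$ a $1$-connected nilpotent Lie group defined over $\Q$ and $\rho(\Gamma) \le \Aff(N^{\Q})$. Since $\Aff(N) \le \Pol^c(\R^n)$ with $c$ the nilpotency class and $n = \dim N$, and $\Aff(N^{\Q}) \le \Pol^c(\Q^n)$, this is automatically a polynomial crystallographic action of bounded degree with rational coefficients. Next, I would argue that $\rho$ is injective: the kernel $\ker(\rho)$ is a normal subgroup of $\Gamma$ that lies inside the stabilizer of every point of $N$, and proper discontinuity on the Hausdorff space $N$ forces each point stabilizer to be finite (take $K = \{x\}$ in the definition of properly discontinuous action). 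Thus $\ker(\rho)$ is a finite normal subgroup of $\Gamma$, and the assumption that the maximal finite normal subgroup is trivial gives $\ker(\rho) = 1$.

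Having established that $i := \rho$ is an injective group morphism into $\overline{\rho(\Gamma)} \le \Pol^c(\Q^n)$, I would invoke Theorem \ref{closurehull} to conclude that the inclusion of $\rho(\Gamma)$ in its Zariski closure is a $\Q$-algebraic hull for $\rho(\Gamma)$. Composing with the isomorphism $\Gamma \cong \rho(\Gamma)$ produces the desired $\Q$-algebraic hull $i: \Gamma \to \overline{\rho(\Gamma)}$. The main obstacle is essentially the bookkeeping about faithfulness; once one recognizes that the finite-normal-subgroup hypothesis is exactly what is needed to upgrade the action produced by Theorem \ref{existtl} from a homomorphism to an injection, the corollary is an immediate consequence of the two theorems invoked.
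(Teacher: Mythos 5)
Your proposal is correct and follows essentially the same route as the paper: take the action from Theorem \ref{existtl}, use proper discontinuity together with the hypothesis on finite normal subgroups to get injectivity, and then apply Theorem \ref{closurehull} to the Zariski closure. The only difference is that you spell out the finiteness of the kernel via point stabilizers, which the paper leaves implicit.
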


\begin{proof}
	Take any polynomial crystallographic action $\rho: \Gamma \to \Pol(\Q^n)$ of bounded degree, which always exists, for example the translation-like NIL-affine crystallographic action we constructed in Theorem \ref{existtl}. The kernel of $\rho$ is a finite normal subgroup because the action is properly discontinuous and hence trivial by assumption. Hence $\rho(\Gamma) \cong \Gamma$ and the Zariski closure of $\rho(\Gamma)$ is the $\Q$-algebraic hull of $\Gamma$. 
\end{proof}

Every finite normal subgroup of $\Gamma$ is also a finite normal subgroup of $\overline{\Gamma}$. Since any finite normal subgroup of a $K$-algebraic hull would centralize the unipotent radical, the existence of a $K$-algebraic hull implies that $\Gamma$ has no non-trivial finite normal subgroups. Hence Corollary \ref{existQhull} characterizes the existence of $K$-algebraic hulls for virtually polycyclic groups completely. This result was already given in \cite[Corollary A.9.]{baue04-1}, but by using different methods, closer to the original arguments in \cite{ragh72-1}.

\section{Group morphisms between virtually polycyclic groups}
\label{sec:main}

In this section we discuss the main result of this paper, in which we apply all the previous work. It shows that every group morphism between virtually polycyclic groups is induced by an affine transformation between translation-like NIL-affine crystallographic actions. 

Recall that the set $\aff(N_1,N_2)$ is given by the elements $(x,\delta)$ with $\delta: N_1 \to N_2$ a continuous morphism and $x \in N_2$ which acts as a right translation. By working with right translations the proof of our main result simplifies and especially Theorem \ref{fixedpoint} will take an easier form than its corresponding result in the virtually nilpotent case.

\begin{repThm}{main}
	Let $\varphi: \Gamma_1 \to \Gamma_2$ be a group morphism between virtually polycyclic groups $\Gamma_1, \Gamma_2$  with translation-like NIL-affine crystallographic action $\rho_i: \Gamma_i \to \Aff(N_i)$. There exists an affine transformation $\alpha \in \aff(N_1,N_2)$ such that $$\rho_2( \varphi(\gamma)) \comp \alpha = \alpha \comp \rho_1(\gamma)$$ for all $\gamma \in \Gamma_1$.
\end{repThm}

\begin{proof}
By replacing $\Gamma_i$ by $\rho_i(\Gamma_i)$, we can assume that $\Gamma_i$ is a subgroup of $\Aff(N_i)$ and $\rho_i$ is the inclusion map for $i = 1, 2$. Take $G_1$ the Zariski closure of $\Gamma_1$ in $\Aff(N_1)$ and $G_2$ the Zariski closure of $\varphi(\Gamma_1)$ in $\Aff(N_2)$. Take $x_0$ any fixed point for a Levi subgroup of $G_2$, which exists by Lemma \ref{reductivefixed}. The right coset $U(G_2) x_0$ is invariant under the action of $\varphi(\Gamma_1)$ and hence we can consider the restriction $\left(\rho_2 \right)_{U(G_2)}$ as in the discussion on page \pageref{restrictaffine}. This action is again translation-like by Proposition \ref{newaffine} and moreover cocompact by Theorem \ref{subaction}. It suffices to show the theorem for the restricted action $\left(\rho_2 \right)_{U(G_2)}$, since composing an affine map $\alpha \in \aff\left(N_1,U(G_2)\right)$ by the natural map $$p: U(G_2) \to U(G_2) x_0 \subseteq N_2$$ gives again an affine map $p \comp \alpha \in \aff \left( N_1, N_2 \right)$. So from now on we can assume that the morphism $\varphi$ is surjective. 

A combination of Theorem \ref{extensionQhull} and Theorem \ref{closurehull} implies that there exists a unique extension $\bar{\varphi}: G_1 \to G_2$ of $\varphi: \Gamma_1 \to \Gamma_2$. Let $L_1$ be the stabilizer of $e \in N_1$, which is a Levi subgroup of $G_1$ by Lemma \ref{stablevi}, and take $L_2 = \bar{\varphi}(L_1)$ as Levi subgroup of $G_2$. Let $x \in N_2$ be a fixed point for the Levi subgroup $L_2$, which always exists by again applying Lemma \ref{reductivefixed}.

Note that Theorem \ref{subaction} implies that $U(\Gamma_i) = N_i$ since we work with translation-like cocompact actions. Consider the restriction $\delta: N_1 = U(\Gamma_1) \to N_2 = U(\Gamma_2)$ of $\bar{\varphi}$ to the unipotent radicables. Define the affine map $\alpha = (x,\delta) \in \aff(N_1,N_2)$ where $x$ acts as a right translation. Take an arbitrary $$\gamma = n l \in \Gamma_1 \le G_1 = U(G_1) L_1 = N_1 L_1$$ with $n \in N_1, l \in L_1$ and let $m \in N_1 \le \Aff(N_1)$. 
By using the multiplication as in Equation (\ref{multi}) and the fact that the action of $U(\Gamma_i)$ is by left translation, a computation shows that   

\begin{align*}
\gamma \cdot m = \gamma \cdot \left( m \cdot e \right) = \left( \gamma m \right) \cdot e = \left( n l m l^{-1} l \right) \cdot e = n l m l^{-1} = \gamma m l^{-1}.
\end{align*} 

By using the previous equality we get 
\begin{align*}
\left( \alpha \comp \gamma \right)(m) = \alpha(\gamma m l^{-1}) &= \delta \left( \gamma m l^{-1}\right) \cdot x  \\ &= \bar{\varphi} (\gamma \hspace{0.5mm} m ) \cdot \left( \varphi(l)^{-1} \cdot x \right) \\  &= \varphi(\gamma) \cdot \left( \delta(m) x \right) \\ &= \varphi(\gamma) \cdot  \alpha(m) ,
\end{align*}
where we use that $x$ a fixed point for $\bar{\varphi}(l)^{-1}$ in the fourth equality. We conclude that $\alpha$ satisfies the conditions of the theorem.
\end{proof}

Note that we can even weaken the condition on the action $\rho_2: \Gamma_2 \to \Aff(N_2)$ in the theorem to just being properly discontinuous, since cocompactness is not needed anywhere. We can also interprete this theorem for general polynomial crystallographic actions of bounded degree. 

\begin{Cor}
\label{generalmorph}
Let $\varphi: \Gamma_1 \to \Gamma_2$ be a group morphism between virtually polycyclic groups with polynomial crystallographic actions $\rho_i: \Gamma_i \to \Pol(\R^{n_i})$ of bounded degree. Then there exists a polynomial map $p \in \Pol(\R^{n_1},\R^{n_2})$ such that $$\rho_2( \varphi(\gamma)) \comp p = p \comp \rho_1(\gamma) $$ for all $\gamma \in \Gamma_1$ 
\end{Cor}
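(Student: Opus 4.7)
The plan is to reduce the corollary to Theorem \ref{main} via the polynomial conjugation statement of Corollary \ref{everypoly}. Since the two crystallographic actions $\rho_1$ and $\rho_2$ are only assumed to be polynomial of bounded degree, they need not be NIL-affine, let alone translation-like. But Corollary \ref{everypoly} tells us precisely that each $\rho_i$ is polynomially conjugate to a translation-like NIL-affine crystallographic action, which is the setting in which Theorem \ref{main} applies.

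More concretely, I would first invoke Corollary \ref{everypoly} to obtain, for each $i\in\{1,2\}$, a polynomial diffeomorphism $q_i : \R^{n_i} \to N_i$ (where $N_i$ is a $1$-connected nilpotent Lie group, identified with $\R^{n_i}$ via a Mal'cev basis) such that $q_i^{-1}$ is also polynomial and
\[
\tilde{\rho}_i(\gamma) \;=\; q_i \comp \rho_i(\gamma) \comp q_i^{-1} \;\in\; \Aff(N_i)
\]
defines a translation-like NIL-affine crystallographic action $\tilde{\rho}_i : \Gamma_i \to \Aff(N_i)$. Next, I would apply Theorem \ref{main} to the group morphism $\varphi : \Gamma_1 \to \Gamma_2$ together with the actions $\tilde{\rho}_1$ and $\tilde{\rho}_2$, which produces an affine map $\alpha \in \aff(N_1,N_2)$ satisfying $\tilde{\rho}_2(\varphi(\gamma)) \comp \alpha = \alpha \comp \tilde{\rho}_1(\gamma)$ for all $\gamma \in \Gamma_1$.

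Finally, I would define
\[
p \;=\; q_2^{-1} \comp \alpha \comp q_1 \;:\; \R^{n_1} \to \R^{n_2}.
\]
Each factor is polynomial: $q_1$ and $q_2^{-1}$ by construction, and $\alpha = (x,\delta) \in \aff(N_1,N_2)$ because a continuous Lie group morphism between $1$-connected nilpotent Lie groups is polynomial in any pair of Mal'cev bases, and right translation by $x$ is likewise polynomial. Hence $p \in \Pol(\R^{n_1},\R^{n_2})$. The intertwining identity follows by direct substitution:
\[
\rho_2(\varphi(\gamma)) \comp p \;=\; q_2^{-1} \comp \tilde{\rho}_2(\varphi(\gamma)) \comp \alpha \comp q_1 \;=\; q_2^{-1} \comp \alpha \comp \tilde{\rho}_1(\gamma) \comp q_1 \;=\; p \comp \rho_1(\gamma).
\]

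There is no real obstacle here beyond bookkeeping, since all the heavy lifting has already been done: Corollary \ref{everypoly} converts the hypothesis into the translation-like NIL-affine setting, and Theorem \ref{main} delivers the affine intertwiner. The only point that deserves a brief remark is that the conjugating polynomial diffeomorphisms $q_i$ exist with polynomial inverse (so the composition $p$ is genuinely polynomial and not merely smooth), which is already built into the definition of polynomial conjugation used in Corollary \ref{everypoly}.
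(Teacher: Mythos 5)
Your proposal is correct and follows exactly the paper's route: the paper's proof is the one-line observation that the corollary follows by combining Theorem \ref{main} with Corollary \ref{everypoly}, and you have simply written out the bookkeeping of that combination. Nothing further is needed.
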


\begin{proof}
This follows immediately from a combination of Theorem \ref{main} with Corollary \ref{everypoly}.
\end{proof}

In the general case of polynomial actions in Corollary \ref{generalmorph}, we do need the cocompactness assumption to ensure the existence of a fixed point for a Levi subgroup as in Lemma \ref{fixedpoly}. Since every NIL-affine crystallographic action $\rho: \Gamma \to \Aff(N)$ is a polynomial crystallographic action, Corollary \ref{generalmorph} holds in particular for NIL-affine crystallographic actions as well. As the following examples shows it is not possible to strengthen Corollary \ref{generalmorph} to the existence of an affine map for NIL-affine crystallographic actions. 

\begin{Ex}
	\label{onlypoly}
Let $\rho_1: \Z^2 \to \Aff(\R^2)$ the NIL-affine crystallographic action as in Example \ref{polyZ2} and $\rho_2: \Z^2 \to \Aff(\R^2)$ the standard action by left translations. An easy computation shows that there is no affine map $\alpha \in \aff(\R^2)$ such that $ \rho_2(\gamma) \comp \alpha = \alpha \comp \rho_1(\gamma)$ for all $\gamma \in \Z^2$. There is a polynomial map though, namely the one given in Example \ref{conjugatepoly}.
\end{Ex} 

\noindent Another consequence of Theorem \ref{main} is a strengthened rigidity compared to Theorem \ref{everypoly} for translation-like NIL-affine crystallographic actions. 

\begin{Cor}
	\label{affineconjugate}
	A translation-like NIL-affine crystallographic action is unique up to affine conjugation.  
\end{Cor}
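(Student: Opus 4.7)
The plan is to feed the identity morphism $\mathrm{id}_\Gamma$ into Theorem \ref{main}: for a virtually polycyclic group $\Gamma$ equipped with two translation-like NIL-affine crystallographic actions $\rho_1: \Gamma \to \Aff(N_1)$ and $\rho_2: \Gamma \to \Aff(N_2)$, the theorem immediately produces an affine map $\alpha = (x,\delta) \in \aff(N_1,N_2)$ satisfying $\rho_2(\gamma) \comp \alpha = \alpha \comp \rho_1(\gamma)$ for every $\gamma \in \Gamma$. The remaining task is to verify that such an $\alpha$ is bijective, which amounts to checking that the continuous group morphism $\delta: N_1 \to N_2$ is an isomorphism of Lie groups, since then $\alpha(n) = \delta(n) x$ is the composition of an isomorphism with a right translation.

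To see this, I would inspect the construction of $\alpha$ inside the proof of Theorem \ref{main}. There, $\delta$ is defined as the restriction to the unipotent radicals of a canonical extension $\overline{\mathrm{id}}: G_1 \to G_2$, where $G_i$ denotes the real Zariski closure of $\rho_i(\Gamma)$ in $\Aff(N_i)$. Since both actions are translation-like and crystallographic, the discussion following Definition \ref{translationlike} together with Theorem \ref{subaction} identifies $U(G_i) = N_i$, so an isomorphism $G_1 \to G_2$ of real algebraic groups would directly yield the desired isomorphism $\delta$. By Theorem \ref{closurehull}, each inclusion $\Gamma \hookrightarrow G_i$ is an $\R$-algebraic hull of $\Gamma$; applying Theorem \ref{extensionQhull} in both directions to $\mathrm{id}_\Gamma$, the uniqueness clause forces the two extensions $\overline{\mathrm{id}}: G_1 \to G_2$ and $\overline{\mathrm{id}^{-1}}: G_2 \to G_1$ to be mutually inverse morphisms of linear algebraic groups over $\R$. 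Restricting to unipotent radicals then delivers the desired Lie group isomorphism $\delta: N_1 \to N_2$, and so $\alpha$ is a bijection, i.e.\ an affine conjugation.

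The main obstacle, in my view, is not conceptual but notational: reconciling the bare statement of Theorem \ref{main} with what actually happens inside its proof. The statement by itself only promises an affine map and gives no hint of invertibility; one really has to remember that $\delta$ is constructed by restricting an algebraic-hull isomorphism, and that the initial reduction to a surjective morphism via restriction to an invariant right coset, carried out in the proof of Theorem \ref{main}, is vacuous when $\varphi = \mathrm{id}_\Gamma$ because $\varphi(\Gamma_1)$ is already Zariski-dense and the coset $U(G_2) x_0$ already equals all of $N_2$. Once this bookkeeping is done, the corollary follows with essentially no additional computation.
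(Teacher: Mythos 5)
Your proposal is correct and follows the route the paper intends: the paper states this corollary without proof as a direct consequence of Theorem \ref{main}, and your argument supplies exactly the missing ingredient, namely that for $\varphi=\mathrm{id}_\Gamma$ the map $\delta$ constructed in that proof is the restriction to unipotent radicals of the unique hull extension, which by Theorem \ref{closurehull} and the two-sided uniqueness in Theorem \ref{extensionQhull} is an isomorphism $G_1\to G_2$, so that $\alpha=(x,\delta)$ is bijective. This matches the paper's approach; no further comment is needed.
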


Finally, we consider the question how many affine maps $\alpha \in \aff(N_1,N_2)$ exist that realize a given group morphism between translation-like NIL-affine crystallographic actions. For this we need to deeper analyse the proof of our main theorem above. First we note that the existence of the affine map $\alpha$ immediately leads to a cocompact action of the group $\varphi(\Gamma_1)$.

\begin{Prop}
	\label{imagecryst}
	Let $\varphi: \Gamma_1 \to \Gamma_2$ be a group morphism between virtually polycyclic groups with NIL-affine crystallographic actions $\rho_i: \Gamma_i \to \Aff(N_i)$. Let $\alpha \in \aff(N_1,N_2)$ be as in Theorem \ref{main}, then the group $\varphi(\Gamma_1)$ acts crystallographically on the right coset $\alpha(N_1)$. 
\end{Prop}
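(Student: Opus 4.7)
My plan is to verify both halves of the definition of a crystallographic action directly from the intertwining identity $\rho_2(\varphi(\gamma)) \comp \alpha = \alpha \comp \rho_1(\gamma)$ supplied by Theorem \ref{main}, combined with the fact that $\rho_1$ is itself crystallographic on $N_1$. Writing $\alpha = (x, \delta)$, the image $\alpha(N_1) = \delta(N_1) x$ is a right coset of the connected Lie subgroup $\delta(N_1) \le N_2$; since $N_2$ is $1$-connected and nilpotent, this subgroup is closed, so $\alpha(N_1)$ is closed in $N_2$. The $\varphi(\Gamma_1)$-invariance of $\alpha(N_1)$ follows at once from the intertwining identity: for all $\gamma \in \Gamma_1$ and $n \in N_1$, one has $\rho_2(\varphi(\gamma))(\alpha(n)) = \alpha(\rho_1(\gamma)(n)) \in \alpha(N_1)$.

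Next I would check proper discontinuity and cocompactness. Proper discontinuity is inherited essentially for free: restricting the properly discontinuous action of $\Gamma_2$ on $N_2$ to the subgroup $\varphi(\Gamma_1)$ and to the closed invariant subset $\alpha(N_1)$ preserves the property, since any compact subset of $\alpha(N_1)$ is compact in $N_2$ and can meet itself under only finitely many group elements. For cocompactness, I would pick a compact set $C \subseteq N_1$ with $\rho_1(\Gamma_1) \cdot C = N_1$, which exists because $\rho_1$ is cocompact, and conclude via the intertwining identity that
\begin{align*}
\varphi(\Gamma_1) \cdot \alpha(C) \;=\; \alpha\!\left( \rho_1(\Gamma_1) \cdot C \right) \;=\; \alpha(N_1),
\end{align*}
with $\alpha(C) \subseteq \alpha(N_1)$ compact. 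I do not foresee any substantial obstacle; the only small point to justify is that $\delta(N_1)$ is closed in $N_2$, which is standard for continuous homomorphisms of Lie groups whose codomain is a $1$-connected nilpotent Lie group.
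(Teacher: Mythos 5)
Your proposal is correct and follows essentially the same route as the paper: invariance of the coset $\alpha(N_1)=\delta(N_1)x$ from the intertwining identity, proper discontinuity inherited by restriction, and cocompactness via $\varphi(\Gamma_1)\cdot\alpha(K)=\alpha(\Gamma_1\cdot K)=\alpha(N_1)$ for a compact fundamental set $K$. The only addition is your (correct, standard) remark that $\delta(N_1)$ is closed in the $1$-connected nilpotent group $N_2$, which the paper leaves implicit.
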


\begin{proof}
The condition on $\alpha$ implies that the right coset $\alpha(N_1) = \delta(N_1) x$ is invariant under the action of $\varphi(\Gamma_1)$. Since the action is automatically properly discontinuous, we are left to show that the action is cocompact. Take $K \subset N_1$ a compact subset such that $\Gamma_1 \cdot K = N_1$, then the subset $\alpha(K)$ is a compact subset of $\alpha(N_1)$. We have that $$\varphi(\Gamma_1) \cdot \alpha(K) = \alpha(\Gamma_1 \cdot K) = \alpha(N_1)$$ and hence the action of $\varphi(\Gamma_1)$ is cocompact by definition.
\end{proof}

Next we prove the following result about fixed points.

\begin{Prop}
	\label{fixedpointset}
Let $K \le \Aff(N)$ be a subgroup of affine transformations on a $1$-connected nilpotent Lie group $N$. If we denote the set of fixed points of $K$ as $$V(K) = \{n \in N \suchthat \forall \hspace{1mm} k \in K:  k \cdot n = n  \},$$ then either $V(K) = \emptyset$ or $V(K) = M n_0$ is a right coset with $M$ a connected Lie subgroup of $N$ and $n_0 \in N$.
\end{Prop}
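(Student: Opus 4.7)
The plan is to assume $V(K) \neq \emptyset$, pick a fixed point $n_0 \in V(K)$, and reduce the statement to computing the common fixed point set of a collection of automorphisms of $N$.

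First I would take any $n_0 \in V(K)$. For $k = (x_k, \delta_k) \in K$, the relation $k \cdot n_0 = n_0$ forces $x_k = n_0 \delta_k(n_0)^{-1}$. Parametrizing an arbitrary point of $N$ as $n = m n_0$ and using that each $\delta_k$ is a group morphism, a direct computation shows that $m n_0 \in V(K)$ if and only if $\delta_k(n_0^{-1} m n_0) = n_0^{-1} m n_0$ for every $k \in K$. Hence, setting
\[ V = \bigl\{ y \in N \suchthat \delta_k(y) = y \text{ for all } k \in K \bigr\}, \]
one obtains $V(K) = n_0 V = (n_0 V n_0^{-1}) n_0 = M n_0$ with $M = n_0 V n_0^{-1}$. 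Since conjugation by $n_0$ is a Lie group automorphism of $N$, it suffices to prove that $V$ is a connected Lie subgroup of $N$.

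Next, I would argue that the fixed point set in $N$ of any single automorphism $\delta \in \Aut(N)$ is a connected Lie subgroup. Since $N$ is $1$-connected and nilpotent, the exponential map $\exp: \lie \to N$ is a diffeomorphism and satisfies $\delta \comp \exp = \exp \comp \hspace{0.5mm} d\delta$, so $\delta(\exp X) = \exp X$ if and only if $d\delta(X) = X$. The eigenspace $\ker(d\delta - \mathrm{Id})$ is a Lie subalgebra because $d\delta$ is a Lie algebra automorphism and hence respects the bracket, and the exponential of any Lie subalgebra of $\lie$ is a connected Lie subgroup of $N$.

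Intersecting over all $k \in K$ and using that $\exp$ is a bijection gives
\[ V = \bigcap_{k \in K} \exp\bigl(\ker(d\delta_k - \mathrm{Id})\bigr) = \exp\Bigl( \bigcap_{k \in K} \ker(d\delta_k - \mathrm{Id}) \Bigr), \]
which is the image under $\exp$ of an intersection of Lie subalgebras. Such an intersection is itself a Lie subalgebra, so $V$, and therefore $M = n_0 V n_0^{-1}$, is a connected Lie subgroup of $N$. I expect the only mildly delicate point to be the coset bookkeeping at the start—making sure $V(K)$ is exhibited as a right coset rather than merely a left coset—but the conjugation $M = n_0 V n_0^{-1}$ resolves this and does not present a serious obstacle.
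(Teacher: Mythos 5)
Your proof is correct and follows essentially the same route as the paper: parametrize points as $m n_0$, use the fixed point $n_0$ to eliminate the translation parts, and reduce the fixed-point equation to the fixed-point equation of automorphisms of $N$. The only substantive difference is that you explicitly justify, via $\exp$ and the subalgebra $\ker(d\delta_k - \mathrm{Id})$, that the common fixed set of these automorphisms is a connected Lie subgroup, a fact the paper's proof asserts without detail (the paper instead reduces to a single element of $K$ and invokes that non-empty intersections of right cosets of connected Lie subgroups are again such cosets).
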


\begin{proof}
We assume that the set of fixed points $V(K) \neq \emptyset$ and we show that it is a right coset of the given form. Take any $n_0 \in V(K)$. Since $$V(K) = \bigcap_{k \in K} V\left(\{k \}\right)$$ is the intersection of fixed point sets of elements in $K$ and the non-empty intersection of right cosets of connected Lie subgroups is again a right coset of a connected Lie subgroup, it suffices to show the theorem for one element $(x,\delta) \in \Aff(N)$ which has a fixed point $n_0$.  

Every element in $N$ is of the form $m n_0$ with $m \in N$. The point $m n_0$ is a fixed point for $(x,\delta)$ if and only if $$m n_0 = (x,\delta) \cdot(m n_0) = x \delta(m) \delta(n_0) = x \delta(m) x^{-1} x \delta(n_0) = x \delta(m) x^{-1} n_0,$$ where we use that $n_0$ is a fixed point of $(x,\delta)$. So $m n_0$ is a fixed point for $(x,\delta)$ if and only if $x \delta(m) x^{-1} = m$. In particular, the fixed point set of $(x,\delta)$ is equal to  $M n_0$ with $M$ the connected Lie subgroup of $N$ given by the latter condition.
\end{proof}

Now we are ready to describe all affine maps realizing a given group morphism. We take the same notations as in Theorem \ref{main}.

\begin{Thm}
	\label{fixedpoint}
For every $\varphi: \Gamma_1 \to \Gamma_2$ there exists a Levi subgroup $L \le \overline{\varphi(\Gamma_1)}$ and a unique $\delta: N_1 \to N_2$ such that every affine map $\alpha \in \aff(N_1,N_2)$ as in Theorem \ref{main} is of the form $(x,\delta)$ with the translation part $x$ a fixed point for $L$.
\end{Thm}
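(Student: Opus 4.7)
My plan is to use the unique extension $\bar\varphi\colon G_1 \to \overline{\varphi(\Gamma_1)}$ of $\varphi$ provided by combining Theorem \ref{extensionQhull} and Theorem \ref{closurehull}, exactly as constructed in the proof of Theorem \ref{main}, where $G_1 = \overline{\Gamma_1}$ inside $\Aff(N_1)$ and I identify $\Gamma_i$ with $\rho_i(\Gamma_i)$. I will define $\delta := \restr{\bar\varphi}{N_1}$, which lands in $U(\overline{\varphi(\Gamma_1)}) \le U(\overline{\Gamma_2}) = N_2$ by the translation-like hypothesis together with Theorem \ref{subaction}, and I will set $L := \bar\varphi(L_1)$, where $L_1 = \operatorname{Stab}_{G_1}(e)$, which is a Levi subgroup of $G_1$ by Lemma \ref{stablevi} applied to $\rho_1$ at the point $e \in N_1 = U(G_1) \cdot e$.

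The key technical step is to propagate the defining equivariance of any $\alpha = (x, \delta') \in \aff(N_1, N_2)$ satisfying Theorem \ref{main} from $\Gamma_1$ to all of $G_1$ by Zariski density. For each fixed $n \in N_1$, the maps $g \mapsto \bar\varphi(g)(\alpha(n))$ and $g \mapsto \alpha(g \cdot n)$ from $G_1$ to $N_2$ are both polynomial in $g$; they agree on $\Gamma_1$ by the assumed equivariance, and since $\Gamma_1$ is Zariski-dense in $G_1$ they must agree on all of $G_1$, yielding $\bar\varphi(g) \comp \alpha = \alpha \comp g$ for every $g \in G_1$. Specializing to $g \in U(G_1) = N_1$, the translation-like hypothesis forces both $g$ and $\bar\varphi(g) \in N_2$ to act as left translations, so evaluating the identity at $e \in N_1$ gives $\bar\varphi(g)\,x = \delta'(g)\,x$, hence $\delta'(g) = \bar\varphi(g)$ for every $g \in N_1$. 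So $\delta' = \delta$ depends only on $\varphi$, establishing uniqueness of $\delta$. Specializing instead to $g = l \in L_1$ and $n = e$ gives $\bar\varphi(l) \cdot x = \alpha(e) = x$, so the translation part $x$ lies in $V(L)$.

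The main obstacle I foresee is verifying that $L = \bar\varphi(L_1)$ is genuinely a Levi subgroup of $\overline{\varphi(\Gamma_1)}$. Surjectivity of $\bar\varphi\colon G_1 \to \overline{\varphi(\Gamma_1)}$ (its image is a closed subgroup of $\overline{\varphi(\Gamma_1)}$ containing the Zariski-dense $\varphi(\Gamma_1)$) together with $\bar\varphi(U(G_1)) \le U(\overline{\varphi(\Gamma_1)})$ gives the decomposition $L \cdot U(\overline{\varphi(\Gamma_1)}) = \overline{\varphi(\Gamma_1)}$. For the triviality of $L \cap U(\overline{\varphi(\Gamma_1)})$, I would use that $G_1$ is virtually solvable so $L_1^0$ is a torus and every element of $L_1$ is semisimple; since $\bar\varphi$ preserves the Jordan decomposition in characteristic zero, $L$ also consists of semisimple elements, and a semisimple-and-unipotent element must be trivial.
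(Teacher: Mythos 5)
There is a genuine gap at the very first step, and it is exactly the difficulty the paper's proof is designed to overcome. You invoke "the unique extension $\bar\varphi\colon G_1 \to \overline{\varphi(\Gamma_1)}$ of $\varphi$ provided by combining Theorem \ref{extensionQhull} and Theorem \ref{closurehull}". But Theorem \ref{extensionQhull} requires the target to be an algebraic hull of $\varphi(\Gamma_1)$, and Theorem \ref{closurehull} only certifies that a Zariski closure is a hull when the action is \emph{crystallographic}. The image $\varphi(\Gamma_1)$ acts properly discontinuously on $N_2$ but in general not cocompactly, and the closure $G_2=\overline{\varphi(\Gamma_1)}$ inside $\Aff(N_2)$ can fail condition $(3)$ of Definition \ref{hull}: there may be a nontrivial reductive normal subgroup centralizing $U(G_2)$. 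The paper's own Klein bottle example ($\varphi\colon \Z \to \Gamma$, $1 \mapsto b$) is a concrete instance: $G_1=\overline{\Z}\cong\R$ is connected, while $b$ lies outside the identity component of $\overline{\langle b\rangle}$, so \emph{no} morphism $G_1 \to \overline{\varphi(\Gamma_1)}$ extending $\varphi$ exists. Since your definitions of $\delta=\restr{\bar\varphi}{N_1}$ and $L=\bar\varphi(L_1)$, as well as the Zariski-density propagation $g\mapsto\bar\varphi(g)(\alpha(n))$, all presuppose this extension into $\Aff(N_2)$, the argument does not get off the ground as written.

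The paper repairs this by first locating where the image of $\alpha$ must live: it takes $K$ to be the intersection of all Levi subgroups of $G_2$ (a normal reductive subgroup centralizing $U(G_2)$), shows via Proposition \ref{imagecryst} and Lemma \ref{stablevi} that every point of $\alpha(N_1)$ is fixed by a Levi subgroup and hence lies in the coset $Mn_0=V(K)$ of Proposition \ref{fixedpointset}, and only then restricts along $r_M$. Since $K$ dies in $r_M(G_2)$, that quotient \emph{is} an $\R$-algebraic hull of $r_M(\varphi(\Gamma_1))$, Theorem \ref{extensionQhull} applies there, and the Levi subgroup in the statement is the preimage $r_M^{-1}(\bar\varphi(L_1))$ rather than $\bar\varphi(L_1)$ itself. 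Your subsequent steps (propagating the equivariance to $G_1$ by density, evaluating at $e$ to pin down the linear part and to see that $x$ is fixed by the image of $L_1$) do match the paper's closing computations and would be fine once carried out on the restricted coset; the missing content is precisely the reduction that makes the extension available.
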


Note that this theorem is the generalization of \cite[Proposition 1.4]{lee95-2} to virtually polycyclic groups, although the notation is slightly different therein since it is stated for affine transformations with left translations. This shows again that the choice of affine maps with right translations can simplify the results, additionally leading to uniqueness for the linear part.

\begin{proof}
Without loss of generality we take $\Gamma_i \le \Aff(N_i)$ as subgroups. Let $G_1, G_2$ be the real Zariski closure of the groups $\Gamma_1$ and $\varphi(\Gamma_1)$ respectively. The first step of the proof is to show that image of $\alpha$ is a subset of some right coset $M n_0$ which is invariant under the group $\varphi(\Gamma_1)$. We then construct the linear map $\delta: N_1 \to M \subseteq N_2$ from the restricted action of $\varphi(\Gamma_1)$ to $M n_0$. Finally we take the inverse image of the Levi subgroup in the restriction to obtain the final result.

First we construct the right coset $M n_0$ on which the group $\varphi(\Gamma_1)$ acts. Let $K$ be the intersection of all Levi subgroups in $G_2$. The group $K$ is a closed, reductive subgroup of $G_2$ and by construction it is normal in $G_2$. This furthermore implies that it centralizes $U(G_2)$, since for every $u \in U(G_2)$ and $k \in K$, we have $[u,k] \in U(G_2) \cap K = \{ e \}$. Lemma \ref{reductivefixed} implies that there is a fixed point $n_0 \in N_2$ for the group $K$ and hence by Lemma \ref{fixedpointset} the fixed point set $V(K) = M n_0$ for some connected Lie subgroup $M \le N_2$. Note that $M n_0$ is invariant under the action of $G_2$ and hence under the action of $\varphi(\Gamma_1)$ as well. 

We now show that if $\alpha$ satisfies the condition of Theorem \ref{main}, then the image $\alpha(N_1)$ always lies in the set $M n_0$. Since $K$ is the intersection of all Levi subgroups, it suffices to show that every point in $\alpha(N_1)$ is a fixed point for a Levi subgroup of $G_2$. Note that $\varphi(\Gamma_1)$ acts crystallographically on $\alpha(N_1)$ by Proposition \ref{imagecryst}. Hence the stabilizer of any element of $\alpha(N_1)$ in $G_2$ is a Levi subgroup by Lemma \ref{stablevi}, as we claimed. 

Consider the restricted action of $\varphi(\Gamma_1)$ to the invariant coset $M n_0$. The group $K$ is a subgroup of the kernel of the restriction map $r_M: G_2 \to \Aff(M)$, hence the intersection of all Levi subgroups in $r_M(G_2) = \overline{r_M(\varphi(\Gamma_1))}$ is trivial. Thus the linear algebraic group $r_M(G_2)$ is the $\R$-algebraic hull of $r_M(\varphi(\Gamma_1))$ by reproducing the proof of Theorem \ref{closurehull}. By Theorem \ref{extensionQhull} there is a unique extension $\bar{\varphi}: G_1 \to r_M(G_2)$. Take $\bar{\varphi}(L_1) = L_2$ as a Levi subgroup of $r_M(G_2)$ and let $\delta: N_1 \to M \subseteq N_2$ be the restriction of $\bar{\varphi}$ to the unipotent radicals, where $U(r_M(G_2)) = r_M(U(G_2)) = U(G_2)$. 

We are now ready to show that every affine transformation $\alpha$ as in Theorem \ref{main} is exactly of the form $\alpha = (x,\delta)$ with $x$ a fixed point for the Levi subgroup $r_M^{-1}(L_2)$ of $G_2$. For the existence part, note that if $x$ is a fixed point of $r_M^{-1}(L_2)$, then $x$ in particular is a fixed point for $K$ and hence lies in the right coset $M n_0$. The existence of $\alpha$ then follows directly from reproducing the proof of Theorem \ref{main} for the restricted action to $M n_0$. 

To show that an arbitrary $\alpha$ is of the desired form,  consider the restricted map $\alpha_M: N_1 \to M n_0$, which is possible since the image of $\alpha$ lies in $M n_0$. Note that the condition of Theorem \ref{main} also holds for the restricted map, i.e. $$\bar{\varphi}(\gamma) \comp \alpha_M = \alpha_M \comp \gamma$$ for all $\gamma \in \Gamma_1$. Since this condition is given by polynomials, it also holds for the real algebraic closure, so $\bar{\varphi}(g) \comp \alpha_M = \alpha_M \comp g$ holds for all $g \in G_1 = \overline{\Gamma_1}$. Since $e$ is a fixed point of $L_1$, we have that $\alpha_M(e) = \alpha(e) = x$ is a fixed point of $\bar{\varphi}(L_1) = L_2$ and hence also of $r_M^{-1}(L_2)$ since $x \in M n_0$. Note that $$\alpha(n) = \alpha(n \cdot e) = \bar{\varphi}(n) (\alpha(e)) = \delta(n) x$$ and thus the theorem follows. 
\end{proof} 

\section{Generalization to solvable Lie groups}

Most of the results in this paper have an analogue for continuous actions of $1$-connected solvable Lie groups $S$. We chose to present the proofs for virtually polycyclic groups before, since these are the most important ones for applications. This section shortly describes how the main results can be generalized to the case of solvable Lie groups, for more details on the definitions we refer to the paper \cite{bd00-1}.

From now on, $S$ denotes a $1$-connected solvable Lie group. The main difference between virtually polycyclic groups and solvable Lie groups are the type of actions we consider. The actions on $\R^n$ we look at are always continuous and polynomial of bounded degree, i.e.~there exists a continuous map $\rho: S \to \Pol^d(\R^n)$ for some $d > 0$. Simple (or free) actions, meaning that for every $x \in \R^n$ and $s \in S$, the equality $s \cdot x = x$ implies  $s = 1$, are the natural counterpart of properly discontinuous actions of discrete groups, whereas transitive actions correspond to cocompact actions. In short we will call the combination of both properties \textbf{simply transitive}, which hence corresponds to crystallographic actions of virtually polycyclic groups as in Definition \ref{defcryst}.

Just as in the proof of \cite[Theorem 1.1.]{miln77-1}, one can show that the group $S$ admits an action $\rho: S \to \Aff(\R^n)$ which is simple. A similar result as Theorem \ref{subaction} holds for every fixed point of a Levi subgroup of $\overline{\rho(S)}$. Hence by exactly the same arguments as for virtually polycyclic groups there exists a simply transitive action $\rho: S \to \Aff(N)$ on some $1$-connected nilpotent Lie group $N$ which is moreover translation-like, meaning that $U \left(\overline{\rho(S)}\right)$ acts by left translations. This gives a new and simplified proof for \cite[Theorem 6.2.]{deki98-1}. 

The notion of $\R$-algebraic hull can be defined just as in Definition \ref{hull} for the Lie group $S$. For every simply transitive action $\rho: S \to \Aff(N)$, the Zariski closure $\overline{\rho(S)}$ will form a $\R$-algebraic hull. Moreover, we have the following result about continuous group morphisms between these actions.

\begin{Thm}
Let $S_1, S_2$ be Lie groups which act simply transitively and translation-like on a nilpotent Lie group $N_i$ via the actions $\rho_i: S_i \to \Aff(N_i)$. For every continuous group morphism $\varphi:S_1 \to S_2$, there exists an affine map $\alpha \in \aff(N_1,N_2)$ such that $$\rho_2( \varphi(s)) \comp \alpha = \alpha \comp \rho_1(s)$$ for all $s \in S_1$.
\end{Thm}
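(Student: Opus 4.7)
The strategy is to mimic the proof of Theorem \ref{main} step by step, replacing virtually polycyclic groups by $1$-connected solvable Lie groups throughout. Since all three intermediate ingredients used there --- restriction to invariant right cosets, extension to the $\R$-algebraic hull, and the identification of $U\!\left(\overline{\rho(S)}\right)$ with $N$ for simply transitive translation-like actions --- are either explicitly asserted in this section or proved by verbatim transcription of the polycyclic arguments, the remaining work is essentially bookkeeping.

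First I would reduce to the case where $\varphi$ is surjective. Since the actions are simply transitive, hence free, the maps $\rho_i$ are injective and we identify $S_i$ with $\rho_i(S_i) \le \Aff(N_i)$. Let $G_2$ be the real Zariski closure of $\varphi(S_1)$ in $\Aff(N_2)$; by Lemma \ref{reductivefixed} a Levi subgroup of $G_2$ fixes some $x_0 \in N_2$, and the right coset $U(G_2)\,x_0$ is invariant under $\varphi(S_1)$. The restricted action is again translation-like by Proposition \ref{newaffine} and simply transitive by the solvable analogue of Theorem \ref{subaction} asserted in the section. Composing any affine map landing in $U(G_2)$ with the natural embedding $U(G_2) \hookrightarrow U(G_2)\,x_0 \subseteq N_2$ yields an element of $\aff(N_1,N_2)$, so it suffices to treat the surjective case.

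Next I would extend $\varphi$ to the algebraic hulls. Since $\rho_i$ is simply transitive and translation-like, the real Zariski closures $G_i = \overline{\rho_i(S_i)}$ are $\R$-algebraic hulls of $S_i$ in the sense defined in this section. The proof of Theorem \ref{extensionQhull} then transfers: consider the graph $H = \overline{\{(s,\varphi(s)) \mid s \in S_1\}} \le G_1 \times G_2$ with the two projections $p_1, p_2$, use the Lie-group analogue of Lemma \ref{inequality} (namely $\dim U(\overline{S}) \le \dim S$ for any continuous polynomial simple action, which follows because the graph of $\varphi$ projects simply to $S_1$) to conclude that $p_1: U(H) \to U(G_1) = N_1$ is an isomorphism, and then use $C_{G_2}(U(G_2)) = Z(U(G_2))$ exactly as in the original proof to show $\ker p_1 = \{e\}$. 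This yields a unique real algebraic extension $\bar{\varphi}: G_1 \to G_2$ that restricts to a continuous group morphism $\delta: N_1 \to N_2$ of the unipotent radicals.

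Finally, let $L_1 \le G_1$ be the stabilizer of $e \in N_1$, which is a Levi subgroup of $G_1$ by the solvable analogue of Lemma \ref{stablevi}. Set $L_2 = \bar{\varphi}(L_1)$ and pick a fixed point $x \in N_2$ of $L_2$ via Lemma \ref{reductivefixed}. Define $\alpha = (x,\delta) \in \aff(N_1,N_2)$. Writing an arbitrary $s \in S_1 \le G_1 = N_1 L_1$ as $s = n l$ and using that $\rho_i$ is translation-like so $U(\Gamma_i) = N_i$ acts by left translations, the exact same computation as in the last display of the proof of Theorem \ref{main} --- crucially that $x$ is fixed by $\bar{\varphi}(l) \in L_2$ --- establishes $\rho_2(\varphi(s)) \comp \alpha = \alpha \comp \rho_1(s)$ for all $s \in S_1$. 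The main obstacle is the continuous-extension step: one must verify that every ingredient of the proof of Theorem \ref{extensionQhull} (Zariski density, the dimension inequality of Lemma \ref{inequality}, and $C_G(U(G)) \subseteq U(G)$) has a clean Lie-group counterpart, which is essentially where the section's claim that $\overline{\rho(S)}$ is a genuine $\R$-algebraic hull earns its keep.
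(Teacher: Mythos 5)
Your proposal is correct and follows exactly the route the paper intends: the section states this theorem without a separate proof, relying on the same three ingredients you identify (restriction to an invariant coset to reduce to the surjective case, extension to the $\R$-algebraic hull via the graph construction of Theorem \ref{extensionQhull}, and the fixed point of the image Levi subgroup as translation part). The only soft spot is your parenthetical justification of the Lie-group analogue of Lemma \ref{inequality}, which you rightly flag yourself as the step requiring genuine verification rather than transcription.
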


Note that under the conditions of the theorem, the Lie groups $S_i$ are necessarily solvable and $1$-connected. For general NIL-affine actions (or even polynomial actions) which are simply transitive, there always exists a polynomial diffeomorphism between the actions as in Corollary \ref{generalmorph}. Example \ref{onlypoly} again implies that this cannot be improved to affine maps for general simply transitive NIL-affine actions of solvable Lie groups.

\bibliography{ref}
\bibliographystyle{plain}

\end{document}